\def\l@subsection{\@tocline{2}{0pt}{2.5pc}{5pc}{}}
\renewcommand\tocchapter[3]{%
  \indentlabel{\@ifnotempty{#2}{\ignorespaces#2.\quad}}#3%
}
\newcommand\@dotsep{4.5}
\def\@tocline#1#2#3#4#5#6#7{\relax
  \ifnum #1>\c@tocdepth 
  \else
    \par \addpenalty\@secpenalty\addvspace{#2}%
    \begingroup \hyphenpenalty\@M
    \@ifempty{#4}{%
      \@tempdima\csname r@tocindent\number#1\endcsname\relax
    }{%
      \@tempdima#4\relax
    }%
    \parindent\z@ \leftskip#3\relax \advance\leftskip\@tempdima\relax
    \rightskip\@pnumwidth plus1em \parfillskip-\@pnumwidth
    #5\leavevmode\hskip-\@tempdima{#6}\nobreak
    \leaders\hbox{$\m@th\mkern \@dotsep mu\hbox{.}\mkern \@dotsep mu$}\hfill
    \nobreak
    \hbox to\@pnumwidth{\@tocpagenum{#7}}\par
    \nobreak
    \endgroup
  \fi}
\renewcommand\csname r@tocindent0\endcsname{0pt}
\def\l@subsection{\@tocline{2}{0pt}{2.5pc}{5pc}{}}
\newtheorem{lemma}{Lemma}[section]
\newtheorem{theorem}{Theorem}
\newtheorem{remark}{Remark}[section]
    \numberwithin{theorem}{section}
    \numberwithin{equation}{section}
    \newcommand{\nn}{\nonumber}
    \newcommand{\bP}{\mathbb{P}}
    \newcommand{\bE}{\mathbb{E}}
    \newcommand{\bas}{\,\overline {\!\sigma}\,}
    \renewcommand{\epsilon}{\varepsilon}
    \newcommand{\id}{{1 \mskip -5mu {\rm I}}}
    \newcommand{\sgn}{\mathop{\rm sgn}\nolimits}
    \newcommand{\eps}{\varepsilon}
    \newcommand{\e}{\mbox{e}}
\newcommand{\comue}[1]{\marginpar{\tiny \textcolor{red}{\texttt{ME:} #1}}}
\newcommand{\comusb}[1]{\marginpar{\tiny\sl \textcolor{blue}{\texttt{SB:} #1}}}
\title[pattern formation for the stochastic Allen-Cahn equation]{Onset of pattern formation for the stochastic Allen-Cahn equation}
\author{Stella Brassesco$^1$ \and Glauco Valle$^{2,*}$ \and Maria Eulália Vares$^3$}
\thanks{* C\MakeLowercase{orresponding author.} E\MakeLowercase{mail: glauco.valle@im.ufrj.br}} 
\thanks{2. Glauco Valle is partially supported by CNPq grants 307938/2022-0 and 403423/2023-6 and by FAPERJ grant E-26/200.442/2023.} \thanks{3. Maria Eulália Vares is partially supported by CNPq grant 310734/2021-5 and by FAPERJ grant E-26/200.442/2023.}
\subjclass[2020]{60H15}
\keywords{stochastic Allen-Cahn equation, reaction-diffusion models, phase separation}
\address{
\newline
$^1$ Departamento de Matemática, Instituto Venezolano de Investigaciones Científicas, Apartado Postal 20632, Caracas, 1020-A, Venezuela
\newline
e-mail: {\rm \texttt{sbrasses@correo.ivic.gob.ve}}
\newline
\newline
$^{2}$ Instituto de Matemática, Universidade Federal do Rio de Janeiro, Caixa Postal 68530, 21945-970, Rio de Janeiro, Brasil
\newline
e-mail: {\rm \texttt{glauco.valle@im.ufrj.br}}
\newline
\newline
$^{3}$ Instituto de Matemática, Universidade Federal do Rio de Janeiro,  Caixa Postal 68530, 21945-970, Rio de Janeiro, Brasil
\newline
e-mail: {\rm \texttt{eulalia@im.ufrj.br}}
}
\begin{document}

\maketitle

\begin{abstract}  We study the behavior of the solution of a stochastic Allen-Cahn equation $\frac{\partial u_\eps }{\partial t}=\frac 12   \frac{\partial^2 u_\eps }{\partial x^2}+ u_\eps -u_\eps^3+\sqrt\eps\, \dot W$, with Dirichlet boundary conditions on a suitably large space interval $[-L_\eps , L_\eps]$, starting from the identically zero function, and where  $\dot W$ is a space-time white noise. Our main goal is the description, in the  small noise limit,  of the onset of the phase separation, with the emergence of spatial regions where $u_\eps$ becomes close $1$ or $-1$. The time scale and the spatial structure are determined by a suitable Gaussian process that appears as the solution of the corresponding linearized equation. This issue has been initially examined  by De Masi et al. [Ann. Probab. {\bf 22}, (1994), 334-371] in the related context of a class of reaction-diffusion models obtained as a superposition of a speeded up stirring process and a spin flip dynamics on $\{-1,1\}^{\mathbb{Z}_\eps}$, where $\mathbb{Z}_\eps=\mathbb{Z}$ modulo $\lfloor\eps^{-1}L_\eps\rfloor$.


\end{abstract}


\section{Introduction}\label{sec:main}

\subsection{Preliminaries and main result}\label{sec:intro-main}


The reaction-diffusion equation 
$$\frac {\partial u}{\partial t} =M\Delta u -\alpha F'(u)$$ 
was introduced by S.~M. Allen and J.~W. Cahn in \cite{AC} as a proposal  to describe the motion of interfaces in binary metal alloys when there are two possible coexisting phases.
 In this context $u=u(x,t)$ is an order parameter,
 $M$ and $\alpha$ are given positive coefficients, $\Delta$ is the Laplacian, and $F'$ is the derivative of $F$,  a two well even  function with minima at $\pm \eta_c$, which  are the equilibrium order parameters, corresponding to each of the phases of the system. 
 
This equation, also known as scalar  Ginzburg-Landau  equation,  appears in several different contexts, in situations where there are two possible attracting states representing distinct pure phases. For instance, it emerges as a kinetic limit for a class of microscopic stochastic systems that mimic a gas in equilibrium which is suddenly cooled and undergoes a phase transition. This was introduced by A. De Masi, P. Ferrari and J. Lebowitz in \cite{DFL} through the superposition of a fast stirring process with a spin flip dynamics. Since then, several variants and different questions regarding the large time behavior have been investigated within this frame, including a version of the main problem considered in the current paper, as described at the end of this section.

 N. Chafee and E.~F. Infante considered  in \cite {C-I}  a class of equations which includes the Allen-Cahn equation in   the one dimensional 
case, where  the spatial variable lies in  an interval,  with Dirichlet boundary conditions. They interpreted it as a dynamical system of gradient type in a suitable space of continuous  functions of the interval,  and characterized  all the critical points. In the Allen-Cahn case mentioned before, since $F'(0)=0$,  the  function identically zero is a stationary solution  of the equation. They showed that, for any fixed sufficiently large interval, it is  unstable  and that there is a pair of stable states, let us say $\pm \phi$, satisfying $\phi(x)>0$ except at the endpoints of the interval, and which correspond to the pure phases.
The number of stationary solutions is finite for each finite interval, but  increases with the interval size and, apart from  $\pm \phi$, all of them are saddles.  
Thus, the global dynamics is clear in this case, as discussed also in \cite{DH}, where the same approach is adopted. 
Still in the one dimensional setting, the problem of the slow motion of interfaces when the size of the interval  diverges to infinity was studied by several authors under suitable boundary conditions, see for instance  \cite{CP} and \cite{FH}  and the references therein. 

In the  pioneering paper \cite{FJ}, W. Faris and G. Jona Lasinio considered  the   Allen-Cahn equation in a finite interval when an additive space--time white noise term of small intensity  is imposed. Their main interest therein  was to study the influence of the noise in the vanishing intensity limit. As discussed in that paper, given the phenomenological origin of this  equation (as many other partial differential equations), 
it seems natural to investigate how  the solutions are affected by small stochastic perturbations. These account for possible  neglected terms in the formulation, or for  microscopic fluctuations   in the original systems. The well known scaling properties of the space--time white noise make it a good and convenient candidate for the above mentioned purpose. Another convenient feature is the existence of an invariant measure which is not difficult to describe (at least formally).


In  \cite{FJ} the authors establish large deviations estimates in the frame of Freidlin and Wentzell theory \cite{F-W} to obtain lower and upper estimates for the probability of tunnelling, that is, the  passage of  solutions with initial configuration close to the attractor $\phi$ to a small neighbourhood of the other attractor $-\phi$. The expected time of tunnelling is exponentially large (in the inverse squared intensity of the noise),  as it is also the case for the analogous times when considering small random perturbations of a dynamical system in  finite dimension.
In \cite{COP}, some of this analysis has been extended to volumes that grow as the noise tends to zero. (For further references and connection with metastability, the reader may check \cite[Sec. 5.5]{OV} or \cite[Chapter 12]{BH}.)

As in \cite{COP}, we consider here  the  stochastic Allen-Cahn equation in a finite  but suitably growing space interval, in the limit of small noise. 
Our   purpose  is to study the escape from the zero state.  This corresponds to the initial step of the  phenomenon called spinodal decomposition:  the solution, in a relatively fast time scale,  approaches locally the
 values $+\eta_c$ or $-\eta_c$, corresponding to the stable states. 
  In particular, we investigate the precise time needed  for such patterns to appear, and we  also give a description of the spatial random structure of these. This problem, addressing the onset of the phase separation, has already been investigated from a mathematical point of view  for some  of the reaction-diffusion models introduced in \cite{DFL}. We refer to \cite{DPPV,G} for the analogous results and details, and also to \cite{CPPV} where a different potential $F$ was involved and which presents a rather different behavior, usually called transient bimodality. 

For simplicity, we take here $F(u)=\frac{u^4}{4}-\frac{u^2}{2}$, so that its points of minima  are $\pm 1$ with a local maximum at $0$. It is important to stress that the hyperbolicity of the local maximum  plays a crucial role for the type of behavior that we observe, as one can see by a simple comparison with the situation in \cite{CPPV}.

For each $\eps >0$, fix $L_\eps > 0$ and let $u_\eps(x,t)$, $(x,t) \in [-L_\eps , L_\eps] \times [0,\infty)$,  be the solution of the stochastic Allen-Cahn equation with Dirichlet boundary conditions and zero initial condition: 
\begin{equation}
\label{acdifeq}
\left\{
\begin{array}{l}
\displaystyle \frac{\partial u_\eps }{\partial t}=\frac 12 \frac{\partial^2 u_\eps }{\partial x^2} +
    u_\eps- u_\eps^3  +\sqrt\eps\, \dot W , \ \textrm{ in } [-L_\eps , L_\eps] \times [0,\infty), 
    \\[8pt]
    u_\eps(-L_\eps,t)=u_\eps(L_\eps,t)=0 , \ \forall \, t\in [0,\infty),
    \\[3pt] 
    u_\eps(x,0)\equiv 0 , \  \, x \in [-L_\eps , L_\eps],
\end{array}
\right.
\end{equation}
where $\dot W$ is a space-time white noise in a stochastic basis $(\Omega,\mathcal{F},(\mathcal{F}_t)_{t\ge 0},\bP)$ (i.e. a centered Gaussian process formally satisfying $\bE\big(\dot W(x,t)\dot W(x',t')\big)=\delta_{x-x',t-t'}$). 
 As in \cite{FJ}, we adopt here the definition of solution to \eqref{acdifeq} given by Walsh in  \cite{WalshLN}, usually referred to as mild solution. More precisely, $u_\epsilon$ is the unique (in a.s. sense) continuous solution to the integral equation that results from applying the operator $( \frac{\partial}{\partial t}-\frac 12   \frac{\partial^2 }{\partial x^2}-Id\big)^{-1}$ to the equation:   
     \begin{align}
  \label{ieq}
   u_\eps(x,t)&= \sqrt\eps Z_\eps (x,t) -\int_0^t\int_{-L_\eps}^{L_\eps} \e^{t-s}\, h_{L_\eps}(x,y,t-s)\, u_\eps^3(y,s)\, dyds,
  \end{align}
where 
\begin{equation}
\label{defZ}
Z_\eps (x,t) = \int_0^t\int_{-L_\eps}^{L_\eps} \e^{t-s}\,  h_{L_\eps}(x,y,t-s)\,dW_{y,s}
\end{equation}
is a centered Gaussian process with covariance function
\begin{equation}
  \label{covz}
  \bE\big[Z_\eps (x,t) Z_\eps (x',t')\big]=\int_0^{t\land t'}\e^{t+t'-2s}\, h_{L_\eps} (x,x',t+t'-2s)\,ds,
\end{equation}
and for any $L>0$, $h_L$ is the fundamental solution of the heat equation with zero Dirichlet boundary conditions in $[-L,L]$. 
 We could of course have written the integral equation  
in terms of the operator 
 $( \frac{\partial}{\partial t}-\frac 12   \frac{\partial^2 }{\partial x^2}\big)^{-1}$, 
keeping the  linear term in the integrand. It is however more convenient for our analysis  to have the explicit exponential in the kernel.
 
Recall that the identically zero function is  an unstable stationary state for the (deterministic) Allen-Cahn equation \cite{C-I} 
\begin{equation}
\label{acdeteq}
\left\{
\begin{array}{l}
{\displaystyle \frac{\partial v_\eps }{\partial t}=\frac 12   \frac{\partial^2 v_\eps }{\partial x^2} +
    v_\eps- v_\eps^3} , \ \textrm{ in } [-L_\eps , L_\eps] \times [0,\infty), 
    \\[8pt]
    v_\eps(-L_\eps,t)=v_\eps(L_\eps,t)=0, \ \forall \, t \in [0,\infty).
\end{array}
\right.
\end{equation}
Thus, as $\epsilon \to 0$, $\sqrt \eps Z_\eps$ should be the leading term of $u_\eps$ at the initial stage. For sufficiently large times, depending on $\eps \approx 0$, it is expected that the noise will push the solution of the perturbed equation \eqref{acdifeq} sufficiently away from $0$. Moreover, for large $L_\eps$ one expects to see interfaces between negative and positive phases, away from which the solution should be close to the stable stationary states $+1$ or $-1$. 


Our aim will be to set $L_\eps \to \infty$ as $\eps \to 0$ in a suitable way, determining a time $T_\eps$ around which $u_\eps$ escapes from the vicinity of the null function,  and to describe the typical profile of $u_\eps(\cdot, T_\eps)$ at such approximate escape time. For suitably large values of $L_\eps$ we should have many zeroes in $[-L_\eps,L_\eps]$, and away from the zeroes $u_\eps(\cdot, T_\eps)$ should take values close to $+1$ or $-1$, with probability converging to one as $\eps \to 0$. 
Besides proving this, it is important to be able to say something about the set of zeroes at time $T_\eps$. With that in mind we consider a change of spatial scale so that $x:=r\sqrt{|\ln \eps|}$. 
On this scale the excursions away from zero can be approximately described in terms of the corresponding excursions of a smooth  Gaussian process on $[-L_\eps/\sqrt{|\ln \eps|},L_\eps/\sqrt{|\ln \eps|}]$.

Roughly speaking, $(L_\eps,T_\eps)$ is determined from the analysis presented in the next three items:
\begin{enumerate} 
\item[(1)] A straightforward computation shows that the variance of the process $\sqrt{\eps} Z_\eps$ at times $0< t \le \Hat T_\eps :=  \frac12|\ln \eps|$ tends to zero uniformly in the space variable as $\epsilon \to 0$. This is not the case at suitable larger times $\mathbb T_\eps > \frac12|\ln \eps| + \frac{1}{4} \ln(\frac{|\ln \eps|}{2})$. (See the paragraph that follows \eqref{covz0}  and the Remark \ref{mathbbt}.) 
We start by considering the behavior of $\sqrt\eps Z_\eps$ at time $\hat T_\eps$. Denote 
\begin{equation}
\label{ypsilon}
\qquad \; \; Y_\epsilon(r):=\sqrt \epsilon Z_\epsilon(r \sqrt{|\ln \eps|},\hat T_\eps),\,   r \in \big[-\tilde L_\eps,\tilde L_\eps \,\big],
\end{equation}
where $\tilde L_\eps := L_\eps/\sqrt{|\ln \epsilon|}$.
If $L_\eps$ is of order $|\ln \eps|$ or larger, then  a more refined calculation allows us to show that the rescaled process $Y_\eps$ is,  with high probability uniformly close on compact intervals to a   stationary smooth Gaussian process of order one
(precise definition will be given). This allows us to show that for any given $K$,  with high probability
$|Y_\eps (r)|$ is bigger than a term of order $|\ln\eps|^{-\frac 14}$ for $r$ in a large fraction of $[-K,K]$.

\item[(2)] An estimate that uses (1) to establish that at time $\hat T_\eps$,  with high probability   $|u_\eps|$ is also at least of order $|\ln\eps|^{-\frac 14}$ in a large fraction of the corresponding intervals. 
\item[(3)] Basic monotonicity properties of solutions of the deterministic Allen-Cahn equation which imply local convergence to the stable stationary states within a time of order $\frac 14 \ln| \ln \eps|$, for an initial condition with absolute value of order at least $|\ln \eps|^{-\frac 14}$. 
\end{enumerate}

Based on (1) we will fix $L_\eps = |\ln \eps|$. Moreover, from the analysis in (2) and (3),  our final time will be 
\begin{equation}
\label{tempo-final}
T^b_\eps := \frac12|\ln \eps| + \frac{1}{4} \ln |\ln \eps| + b  
\end{equation}
for some $b>0$ which is related to how close to 1 $|u_\eps|$ will be at time $T^b_\eps$. In particular, $T^b_\epsilon$ is of the same order as $\mathbb T_\eps$. Having fixed these values for $L_\eps$ and $T^b_\eps$,
we point out that  other choices of $L_\epsilon$ of order larger than  $|\ln \epsilon|$   would work as well, as can be verified from our proofs.  
 Before  stating our main result, let us set $\mathbf C(I)$ as the spaces of continuous real functions defined on an interval $I \subset \mathbb{R}$ endowed with the topology of uniform convergence on compacts.
 

\medskip

We now state the main result of this paper.

\medskip

\begin{theorem}\label{thm:main} 
Let $Y_\epsilon$  and $T^b_\eps$ be as defined above. Then, 
\text{}
\begin{itemize}
\item[\rm{(a)} ] $(8\,\pi |\ln\epsilon|)^{\frac 14} Y_\eps$ converges in distribution as $\eps \downarrow 0$ to a zero average differentiable Gaussian process $Y = \{Y(r)\colon r\in \mathbb{R}\}$ with covariance function $\e^{-\frac{(r-r')^2}{2}}$.
\item[\rm{(b)}]
For every $K< \infty$, $\vartheta > 0$, 
\begin{equation}\label{eq:main}
 \sup_{r:|r|\le K, |Y_\eps(r)| > \vartheta |\ln\epsilon|^{-\frac 14}}
 \left|u_\epsilon(r \sqrt{|\ln \epsilon|},T_\eps^b) - \sgn(Y_\eps(r))\right|\to 0,
\end{equation}
in probability, as  $\epsilon \to 0, b \to \infty$, where as usual $\sgn(y)=+1$ if $y>0$ and $\sgn(y)=-1$ if $y<0$.
\end{itemize}
\end{theorem}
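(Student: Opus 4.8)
The plan is to handle the two parts by different means: a Gaussian-process convergence argument for (a), and a comparison with the deterministic flow for (b). For (a), since $Y_\eps$ is a centered Gaussian process, convergence in distribution in $\mathbf C(\mathbb R)$ follows from convergence of the finite-dimensional distributions together with tightness. For the former it suffices to show $(8\pi|\ln\eps|)^{1/2}\,\bE[Y_\eps(r)Y_\eps(r')]\to\e^{-(r-r')^2/2}$. I would start from \eqref{covz} with $t=t'=\hat T_\eps$ and $x=r\sqrt{|\ln\eps|}$, $x'=r'\sqrt{|\ln\eps|}$, change variables $\tau=2(\hat T_\eps-s)$ to rewrite the covariance as $\tfrac12\int_0^{|\ln\eps|}\e^{\tau-|\ln\eps|}\,h_{L_\eps}(r\sqrt{|\ln\eps|},r'\sqrt{|\ln\eps|},\tau)\,d\tau$, replace $h_{L_\eps}$ by the free heat kernel $(2\pi\tau)^{-1/2}\e^{-(r-r')^2|\ln\eps|/(2\tau)}$ (legitimate since $L_\eps=|\ln\eps|$ dwarfs the spatial arguments of order $\sqrt{|\ln\eps|}$, so the boundary correction is exponentially small), and exploit that the weight $\e^{\tau-|\ln\eps|}$ concentrates the integral near $\tau=|\ln\eps|$; a Laplace-type argument (substituting $\tau=|\ln\eps|-u$ and passing to the limit by dominated convergence) then yields $\tfrac12(2\pi|\ln\eps|)^{-1/2}\e^{-(r-r')^2/2}$, which gives the stated limit after the normalization. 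Tightness reduces, by Gaussianity and Kolmogorov's criterion (Gaussian increments have all moments controlled by the variance), to a uniform-in-$\eps$ bound $\bE\big[\big((8\pi|\ln\eps|)^{1/4}(Y_\eps(r)-Y_\eps(r'))\big)^2\big]\le C|r-r'|^2$ on compacts, obtained from the same representation by estimating spatial differences of the kernel.

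For (b), I would implement the three-step scheme sketched in items (1)--(3). At time $\hat T_\eps$ I would first show that $u_\eps(r\sqrt{|\ln\eps|},\hat T_\eps)=Y_\eps(r)+o(|\ln\eps|^{-1/4})$ with high probability, by bounding the cubic integral in \eqref{ieq}: since the variance of $\sqrt\eps Z_\eps$ is uniformly $o(1)$ up to $\hat T_\eps$, the nonlinear contribution is negligible, so on the event $\{|Y_\eps(r)|>\vartheta|\ln\eps|^{-1/4}\}$ one obtains $\sgn(u_\eps)=\sgn(Y_\eps)$ and $|u_\eps|\gtrsim\vartheta|\ln\eps|^{-1/4}$. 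Next, on the short window $[\hat T_\eps,T^b_\eps]$, of length only $\tfrac14\ln|\ln\eps|+b$, I would compare $u_\eps$ with the solution of the deterministic equation \eqref{acdeteq} issued from $u_\eps(\cdot,\hat T_\eps)$, showing that the noise accumulated over this window remains negligible relative to the growth of the deterministic part.

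The third step is the deterministic analysis: near $0$ the equation linearizes to exponential growth at rate one, so an initial height $\vartheta|\ln\eps|^{-1/4}$ reaches order one precisely in time $\tfrac14\ln|\ln\eps|$, the middle term of $T^b_\eps$, after which the term $-u^3$ drives the solution to within $O(\e^{-2b})$ of $\pm1$; letting $b\to\infty$ produces the claimed closeness to $\sgn(Y_\eps(r))$. The decisive feature making the localization work is that the diffusion length over $[\hat T_\eps,T^b_\eps]$ is of order $\sqrt{\ln|\ln\eps|}$, i.e. $\sqrt{\ln|\ln\eps|}/\sqrt{|\ln\eps|}\to0$ on the $r$-scale, so that the sign fixed at $\hat T_\eps$ cannot be altered and the comparison may be carried out against spatially homogeneous sub- and super-solutions of the ODE $\dot w=w-w^3$.

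I expect the main obstacle to be the combination of the second and third steps: controlling the stochastic equation by the deterministic flow throughout the growth phase while guaranteeing that neither the residual noise nor the diffusion flips the local sign frozen at $\hat T_\eps$, and doing so uniformly over the random set $\{|r|\le K,\ |Y_\eps(r)|>\vartheta|\ln\eps|^{-1/4}\}$. Concretely, constructing barriers (sub- and super-solutions) that interpolate between the small initial height and the stable states $\pm1$, and quantifying the exponentially small effect of diffusion on the sign over this time window, is the technical heart of the argument.
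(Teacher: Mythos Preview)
Your overall architecture matches the paper's, and for (a) the approach is essentially the same: Laplace-type asymptotics for the covariance plus Kolmogorov tightness. The paper's only twist is that it first proves the result for the whole-line process $X_\eps(r)=\sqrt\eps\, Z(r\sqrt{|\ln\eps|},\hat T_\eps)$ and then shows $\sup_{|r|\le c\tilde L_\eps}|X_\eps-Y_\eps|\to 0$ via explicit bounds on $h-h_{L_\eps}$, rather than working with $Y_\eps$ directly.

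The genuine gap is in your Stage~I. The sentence ``since the variance of $\sqrt\eps Z_\eps$ is uniformly $o(1)$ up to $\hat T_\eps$, the nonlinear contribution is negligible'' does not go through: the crude bound on the cubic term in \eqref{ieq} is $\e^{\hat T_\eps}\,\|\sqrt\eps Z_\eps\|_{\hat T_\eps}^3 \asymp \eps^{-1/2}|\ln\eps|^{-3/4}$, which diverges. Two ingredients, both absent from your sketch, are needed. First, a monotonicity comparison (Buckdahn--Pardoux) that yields directly $|u_\eps-\sqrt\eps Z_\eps|\le G\big(|\sqrt\eps Z_\eps|^3\big)$, i.e.\ with $Z_\eps$ rather than the unknown $u_\eps$ inside the cube, so no circular bootstrap is required. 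Second, a split of $[0,\hat T_\eps]$ at $\hat T_\eps/2$: on $[0,\hat T_\eps/2]$ one has $\|\sqrt\eps Z_\eps\|\lesssim \eps^{1/4}$, hence $\|u_\eps\|\lesssim\eps^{1/4}$; restarting at $\hat T_\eps/2$ via the Markov property with this small datum $\bar u_0$, the cubic contribution over the second half is dominated by $\|\bar u_0\|^3\e^{3\hat T_\eps/2}\lesssim |\ln\eps|^{-3/4}$, which is $o(|\ln\eps|^{-1/4})$ as required. Without this decomposition the factor $\e^{\hat T_\eps}=\eps^{-1/2}$ cannot be absorbed.

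By contrast, the steps you flag as the main obstacle are handled rather directly in the paper. On the short window $\hat\tau^b_\eps=\tfrac14\ln|\ln\eps|+b$, a Gronwall iteration gives $|u_\eps-v_\eps|\lesssim \e^{C\hat\tau^b_\eps}\sqrt\eps\,\|Z_\eps\|_{\hat\tau^b_\eps}$ plus a boundary term, both of size $|\ln\eps|^C\e^{-c\sqrt{|\ln\eps|}}$ or smaller. The deterministic analysis reduces, via monotonicity, to the explicit ODE solution $w(t;a)=\sgn(a)\big(1+\e^{-2t}(a^{-2}-1)\big)^{-1/2}$, combined with a localization estimate comparing the Dirichlet and whole-line Allen--Cahn flows on $[-qK_\eps,qK_\eps]$. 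So the technical heart lies in Stage~I, not in the barrier construction you anticipated.
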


\begin{remark} Using classical representation theorems, for instance the Corollary in \cite{BD}, by $(a)$ in Theorem \ref{thm:main} we can couple $(8\,\pi |\ln\epsilon|)^{\frac 14} Y_\eps$, $0<\eps<1)$  and $Y$ in a way that the convergence occurs almost surely in $\mathbf C[-K,K]$. This implies that almost surely $\{ r\colon |r|\le K, |Y_\eps(r)| < \vartheta |\ln\epsilon|^{-\frac 14} \}$ is, for  sufficiently small $\epsilon$, contained in a finite union of intervals whose total length can be made arbitrarily small by making $\vartheta$ small. Thus, given any $0<K<\infty$ and $\rho \approx 1$, taking $\vartheta>0$ small,  $(b)$ in Theorem \ref{thm:main} implies that on a fraction $\rho$ of the interval $[-K\sqrt{|\ln \eps|},K\sqrt{|\ln \eps|}]$, $u_\epsilon(\cdot,T^b_\eps)$ is uniformly close to one of the states $1$ or $-1$, with probability that converges to one as $\epsilon \to 0$ and $b \to \infty$.
\end{remark}


\subsection{Outline of the proof}\label{sec:plan}

The proof of Theorem \ref{thm:main} will be divided in three stages. The first stage will be called the scaling stage which will be followed in sequence by stages I and  II. Let us briefly describe them:
\begin{enumerate}
\item[- [\!\!] {\bf Scaling Stage}] This initial stage concerns how the Gaussian process in the right hand side of \eqref{ieq} determines the space-time scaling  appearing in Theorem \ref{thm:main}. We obtain lower bounds on $T=T(\epsilon)$ such that $\sup_{|x|\le L_\epsilon, t\le T}\mathbb E(\epsilon Z_\epsilon^2(x,t))\to 0$ with $\epsilon$.  We show that $\sqrt \epsilon Z_\eps$ reaches a non-infinitesimal variance by time $\mathbb{T}_\eps$.
For the process $Y_\eps$ defined in \eqref{ypsilon}, we prove that
$$
(8\,\pi |\ln\epsilon|)^{\frac 14} Y_\epsilon
$$
suitably converges to a smooth Gaussian process $(Y(r))_{r\in \mathbb{R}}$ with covariance function $\e^{-\frac{(r-r')^2}{2}}$, thus proving (a) in Theorem \ref{thm:main}. 
At this stage we also obtain some additional control on $\sqrt \epsilon Z_\eps$ that will be used in stages I and II.
\item[- [\!\!] {\bf Stage I}] This is a  prelude  to the formation of patterns in the solution of \eqref{acdifeq}. 
Our aim will be to show that for every $\vartheta > 0$, for $\hat T_\eps=\frac 12|\ln \eps|$, 
\begin{equation}\label{eq:stageI}
\qquad \quad \liminf_{\eps \to 0} \bP \Big(\inf_{r:|r|\le K, |Y_\eps(r)| > \vartheta |\ln\epsilon|^{-\frac 14}}
 \big|u_\epsilon(r \sqrt{|\ln \epsilon|},\hat T_\eps)\big|> \frac{\vartheta}{2 |\ln \epsilon|^{\frac{1}{4}}} \Big) = 1,
\end{equation} 
see Lemma \ref{lem:stage1}. The proof of \eqref{eq:stageI} will first require a control on the supremum norm of $u_\epsilon$ on a smaller time
$\hat T_\eps/2$.
We show that $u_\eps$ is of order $\epsilon^\beta$ at time $\hat T_\eps/2$, for some $\beta > 0$, and this ensures that the evolution of $u_\eps$ is still determined by the evolution of the Gaussian process in the right hand side of \eqref{ieq} in the  time interval $
 [\hat T_\eps/2, \hat T_\eps]$. 
\item[- [\!\!] {\bf Stage II}] This is the final stage. We shall use \eqref{eq:stageI} to prove \eqref{eq:main}. The proof has two main arguments:
\begin{enumerate}
\item[(1)] By a basic property of solutions of \eqref{acdeteq}, 
if $|v_\epsilon(\cdot,0)| > |\ln \eps|^{-\frac{1}{4}}$, then there exists $b = b(\varrho) < \frac{1}{4}$ such that $|v_\epsilon(\cdot, \frac{1}{4} \ln |\ln \eps| + b)| > \varrho$. Because of this, we have set $T^b_\epsilon = \hat T_\eps + \frac{1}{4} \ln |\ln \eps| + b$. 
\item[(2)] With probability close to one when $\eps$ goes to $0$, $|u_\epsilon(\cdot,T^b_\eps)|$ is uniformly close to $|v_\eps(\cdot, \frac{1}{4} \ln |\ln \eps| + b)|$ in an interval with size of order $\sqrt{|\ln \eps|}$ if $v_\eps(\cdot, 0) = u_\eps(\cdot, \hat T_\eps)$ (see Lemma \ref{lem:stageII}). 
\end{enumerate}
\end{enumerate}

\medskip
The remainder of the paper is organized in Sections \ref{sec:scaling}, \ref{sec:stageI} and \ref{sec:stageII} devoted respectively to the scaling stage, stage I and stage II in the proof of Theorem \ref{thm:main}. 
Some more technical proofs are given in the Appendix.





\medskip

\section{Scaling Stage}\label{sec:scaling}

\subsection{The Gaussian term}

Let  the Gaussian  process $Z$ be
   defined for the spatial variable  $x$ on the whole line instead of  $[-L_\eps,L_\eps]$ by 
   \begin{equation}
   \label{defz0}
  Z(x,t)=\int_0^t\int_{-\infty}^\infty \e^{t-s}\,  h(x,y,t-s)\,dW_{y,s}, 
  \end{equation}
  where $h$ is the heat kernel on the whole line. 
Both $Z(x,t)$ and $Z_\eps(x,t)$  as defined in \eqref{defZ}, are continuous in $(x,t)$, and we may think of them as processes $\{Z(\cdot,t)\colon t>0\}$ and  $\{Z_\eps(\cdot,t)\colon t>0\}$ taking values 
  on $\mathbf C(-\infty,\infty)$ and $\mathbf C(-L_\epsilon,L_\epsilon)$ respectively.
  
  We are going to show that, as $L_\epsilon \to \infty$,   the trajectories of $Z_\eps$ become close to those of $Z$, in a sense that will be made precise. The following computations (straightforward in the case 
  of $Z$) are helpful to understand the spatial and temporal scales involved. 
     \begin{equation}
     \label{covz0}
     \bE \big[ Z^2(x,T) \big] = \int_0^T\frac{\e^{2s}}{\sqrt{4\pi s}} ds \sim \frac{\e^{2T}}{2\sqrt{4\pi T}}\quad \mbox{ as }  T \to \infty.
     \end{equation}
In particular, the Gaussian process $Z(x,t)$ is stationary as a process in the spatial variable $x$, and its variance is an increasing function of $t$. 
We define $\mathbb{T}_\eps$ as the solution to 
$\mathbb E\,\epsilon Z^2(0,t)=1$, and observe that it satisfies 
 $2 \mathbb{T}_\eps -\frac 12 \ln \mathbb{T}_\eps\sim |\ln \eps|$.  
Therefore,
$\mathbb{T}_\eps > \frac{|\ln \eps|}{2}+\frac14 \ln(\frac{|\ln \eps| }{2})$.

 Moreover,  it can be seen by the same arguments used below to prove \eqref{EZ02}  that after a  spatial scaling
\begin{equation*}
\label{g}
\bE\big[\sqrt \eps \,Z(r\sqrt {2\mathbb{T}_\eps},\mathbb{T}_\eps)\,
     \sqrt \eps Z(r'\sqrt {2\mathbb{T}_\eps},\mathbb{T}_\eps) \big]
     \sim \e^{-\frac{(r-r')^2}{2}}.
\end{equation*} 
That is, the scaled process as a process in $r$ has in the limit a smooth covariance, and so the corresponding limiting Gaussian process has smooth trajectories. 
 
 

Recall that the kernel $h_{L_\eps}=h_{L_\eps}(x,y,t)$ has an interpretation in terms of a standard Brownian motion in $\mathbb R$ with  
$B(0)=x\in [-L_\eps,L_\eps]$:
\begin{equation}\label{eq:hl}
h_{L_\eps}(x,y,t)\, dy=\bP_x\big(B(t)\in dy\,,  \nu_{L_\eps}>t\big), 
\end{equation}
where $\nu_{L_\eps}$ is  the exit time of  $B(\cdot)$ from $[-L_\eps,L_\eps]$. 
In particular, 
  \begin{equation}
 \label{pineq}
 h_{L_\eps}(x,y,t)<h(x,y,t), 
 \end{equation}
and, from \eqref{covz} and  the identity in \eqref {covz0},
\begin{equation}
\label{sigz}
\bE [Z_\eps^2(x,t)] <\int_0^t\frac{\e^{2s}}{\sqrt{4\pi s}}ds.
 \end{equation}
From the reflection principle for Brownian motion, it follows:
 \begin{equation}
\label{rp}
h_{L\eps}(x,y,t)=\frac {1}{\sqrt{2\pi t}} \sum _{j\in \mathbb Z}
\big(\,\e^{-\frac{(x-y-4jL_\eps)^2}{2t}}\,-\,\e^{-\frac{(x+y-4jL_\eps-2L_\eps)^2}{2t}}\big).
\end{equation}
\begin{remark}
\label{mathbbt}
It is not difficult to see from \eqref{covz}, \eqref{rp} and the above definition of $\mathbb{T}_\eps$ that  $ \big|\bE [\eps\,Z_\eps^2(0,\mathbb{T}_\eps)]-\bE [\eps\,Z^2(0,\mathbb{T}_\eps)]\big|\to 0 $ as $\epsilon \to 0$. 
\end{remark}

The kernel $h_{L_\eps}(x,y,t)$ can also be explicitly computed 
 in  terms of the eigenfunctions of the 
Laplace operator with Dirichlet boundary conditions: 
\begin{equation}
\label{ee}
h_{L_\eps}(x,y,t)=\frac{1}{L_\eps} \sum _{n\ge 1}\e^{-t\frac{\pi^2n^2}{2(2L_\eps)^2}}\,
\sin\big(n\pi(\frac{x+L_\eps}{2L_\eps})\big)\,
\sin\big(n\pi(\frac{y+L_\eps}{2L_\eps})\big).
\end{equation}
Given $T>0$, we estimate 
\begin{equation}\label{normsupte}
\|Z_\eps\|_{L_\eps,T} := \sup _{|x| \le L_\eps , \, t\le T}|Z_\eps (x,t)|
\end{equation} 
with the aid of Borell's inequality (see \cite[Section 2.1]{Adler} or \cite[Section 2.4]{AW}): 
for any $\lambda>\bE [\|Z_\eps\|_{L_\eps,T}]$ 
\begin{equation}
\label{Bineq}
\bP\,\big(\|Z_\eps\|_{L_\eps,T} >\lambda\big)
\le 4\,\e^{-\frac 12 (\lambda-\bE \|Z_\eps\|_{L_\eps,T})^2/\bar\sigma^2}, 
\end{equation}
where 
\begin{equation}
\label{bas}
\bas^2=\bas^2(L_\eps,T):=\sup _{|x|\le L_\eps, \, t\le T} \bE \big[ Z_\eps^2(x,t) \big]. 
\end{equation}
As already discussed in Section \ref{sec:main}, $L_\eps = |\ln \eps|$ and $T$ will depend on $\eps$, going to infinity conveniently as $\epsilon$ goes to zero, but it will be convenient to have the following estimates for any   $T \gg 1$. 

Now, $\bas$ is easily estimated from \eqref{sigz}:
\begin{equation}
\label{estsigz}
\bas^2\le K\,\frac{\e^{2T}}{\sqrt T}.
\end{equation}
Here and in what follows $K$ denotes a positive  constant that does not depend 
on $\eps$ or  $T$, and that may change from line to line.
To estimate $\bE [\|Z_\eps\|_{L_\eps,T}]$, we use Dudley's inequality
(see for instance \cite[Corollary 4.15 in Chapter IV]{Adler} or \cite[Section 2.5]{AW})): 
there is $K$ such that 
\begin{equation}
\label{dudineq}
\bE [\|Z_\eps\|_{L_\eps,T}] \le K\, \int_0^\infty (\ln N(\eps,\delta))^{\frac 12} d\delta.
\end{equation} 
For each $\delta>0$, $N(\eps,\delta)$ denotes the minimum number of balls with radius $\delta$ in the metric $d$ 
needed to cover  $R:=[-L_\eps,L_\eps]\times [0,T]\subset \mathbb R\times \mathbb R^+$, where $d$ 
is defined  by:
\[
d\big((x,t),(y,s)\big)= \big(\,\bE \big[Z_\eps(x,t)-Z_\eps(y,s)\big]^2\,\big)^\frac12.
\]
 The function $\ln N(\eps,\delta)$ is known as the metric entropy. 

To estimate  $N(\eps,\delta)$, recall that for $t>s>0$ and $x,y\in [-L,L]$, from 
\eqref{covz} and \eqref{ee} it follows that 
\begin{align}
\label{xdif2}
\bE \big[\big(Z_\eps(x,t)-Z_\eps(y,t)\big)^2\big]&\le K\,\e^{2t}\,|x-y| ,
\\ 
\label{tdif2}
\bE \big[\big(Z_\eps(x,t)-Z_\eps(x,s)\big)^2\big]&\le K\,\e^{2t}\,|t-s|^{1/2}.
\end{align}
The inequalities  follow as straightforward  extensions of \cite[Proposition 4.2]{Walsh}, after taking into account the exponential factor. 
It is clear that $R\subset B_{\bas}(0,0)$, where $B_\alpha (x,t)$ denotes 
the ball of radius $\alpha$ in the metric $d$ centered in $(x,t)$. Thus the upper limit of
  the integral in \eqref{dudineq} is indeed 
$\bas$. From \eqref{xdif2}
and \eqref{tdif2} it is not difficult to estimate
\[N(\eps,\delta)\le K \big( \frac{T\e^{4T}}{\delta^4}\big)\big( \frac{L_\eps\e^{2T}}{\delta^2}\big)
\]
and then, 
\begin{align}
\label{entropy}
\int_0^{\bas}&(\ln N(\eps,\delta))^{\frac 12} d\delta
\le \int^{K\e^{T}/T^{\frac 14}}_0\big(\ln{\frac{KL_\eps T\e^{6T}}{\delta^6}}\big)^{1/2}\,d\delta
\\ \nn
=&(K\,L_\eps T\e^{6T})^{1/6}\int^\infty_{\big(\ln{(KL_\eps T^{5/2})}\big)^{1/2}}\e^{-\frac{u^2}{6}}\frac{u^2}{3}\,du 
 \\ \nn
\le & (K\,L_\eps T\e^{6T})^{1/6}\,2\big(\ln{(KL_\eps T^{5/2})}\big)^{\frac 12} (KL_\eps T^{5/2})^{-\frac 16} 
< \frac{\e^T}{T^{\frac 14}}\,\ln{(L_\eps T^{5/2})}.
\end{align}
Thus,
\begin{equation}
\label{esup}
\bE \Big[\sup_{|x|\le L_\eps, t\le T}|Z_\eps (x,t)|\Big]\le K\,
  \frac{\e^T}{T^{\frac 14}}\,\ln{(L_\eps T^{5/2})}.
\end{equation} 
Now, for each   $\rho \ge 0$, consider  $\hat T_{\epsilon, \rho}= \frac{|\ln\epsilon|}{2+\rho}$, thus the case $\rho=0$ corresponds to $\hat T_\eps = \frac{|\ln{\epsilon|}}{2}$.  In this particular case, we verify from \eqref{estsigz} and  \eqref{esup} that
$$
  \bas^2(L_\eps,\hat T_{\epsilon,\rho})\le K
  \frac{\eps^{-\frac{2}{2+\rho}}}{|\ln \epsilon|^{1/2}}, 
  $$
and
\begin{equation}\label{Bineq2}
 \bE \Big[\sup_{|x|\le L_\eps, t\le \hat T_{\epsilon, \rho}}|Z_\eps (x,t)|\Big]\le K
 \frac{\eps^{-\frac{1}{2+\rho}}\,\ln(|\ln \epsilon|)}{|\ln \epsilon|^{1/4}}.
\end{equation}
From \eqref{Bineq}  we obtain
\begin{equation}
\label{ZTh}
\lim_{\eps \to 0} \bP\,\Big(\sup_{|x|\le L_\eps,\, t\le \frac{|\ln \epsilon|}{2+\rho} }
\sqrt\epsilon\, |Z_\eps (x,t)|>
\frac{\epsilon^{\frac 12-\frac{1}{2+\rho}} (\ln(|\ln \epsilon|))^2}{|\ln \epsilon|^{1/4}}\,
\Big) = 0,
\end{equation}
for $\rho\geq 0$. 

\subsection{Scaling limit of the Gaussian term} 

Compute
\begin{align}
\label{EZ02}
\bE \big[ Z&(r\sqrt {|\ln\epsilon|},\hat T_\eps)\,Z(r'\sqrt {|\ln\epsilon|},\hat T_\eps)\big] 
\\ 
\nn 
&= \int_0^{\hat T_\eps}\e^{2(\hat T_\eps-s)}\int
h(r\sqrt {|\ln\epsilon|},y,\hat T_\eps-s)\,h(r'\sqrt {|\ln\epsilon|},y,\hat T_\eps-s)\,dyds
\\ 
\nn
 & = \int_0^{\hat T_\eps}\e^{2(\hat T_\eps-s)}
h(r\sqrt {|\ln\epsilon|},r'\sqrt {|\ln\epsilon|},2(\hat T_\eps-s))\,ds.
\end{align} 
In other words, the Gaussian  process $X_\epsilon(r):=\sqrt \epsilon Z(r\sqrt{|\ln\epsilon|},\hat T_\eps)$, 
as a process in the spatial variable $r$ has correlation function 
\begin{align}
\label{Xeps}
\bE \big[X_\epsilon(r)\,X_\epsilon(r')\big]
= & \int_0^{\hat T_\eps}\e^{-2s}
\frac{\e^{-\frac{(r-r')^2\,|\ln\epsilon|}{4(\hat T_\eps-s)}}}{\sqrt{4\pi(\hat T_\eps-s)}}\,ds 
\\ \nonumber
 = &  \frac{\sqrt {|\ln\epsilon|}}{2\,\sqrt{2\pi}}\int_0^1\e^{- u |\ln\epsilon|}\,
\frac{\e^{-\frac{(r-r')^2}{2(1-u)}}}{\sqrt{1-u}}\,du\,\sim 
\,\frac{\e^{-\frac{(r-r')^2}{2}}}{\sqrt{|\ln\epsilon|}\,2 \sqrt{2\pi}}.
\end{align} 
The identities follow from the definition of $\hat T_\eps$, and  the change of variables 
$\frac{2s}{|\ln\epsilon|}\mapsto u$, and the asymptotic equivalence follows from the classical  Laplace method as in \cite[Theorem  7.1]{O} after  observing that 
the integral above is of the form
\[
\int_0^1\e^{-\lambda p(u)}\,q(u)\,du\underset{\lambda\to \infty} {\sim}\e^{-\lambda p(0)}\,q(0)
\int_0^\infty\e^{- \lambda \big(up'(0)\big)}\,du. 
\]
 In our case, 
 $p(u)=u$ is continuous and differentiable,  and attains its minumum value at $u_0=0$, and
 $q(u)=\frac{\e^{-\frac{(r-r')^2}{2(1-u)}}}{\sqrt{1-u}}$ is  integrable over 
 $[0,1]$,  continuous  at $u=0$ and  $q(0)\neq 0$. 
Then,
 as $\epsilon \to 0$, 
\[
\int_0^1\e^{-u\,|\ln\epsilon|}\,
\frac{\e^{-\frac{(r-r')^2}{2(1-u)}}}{\sqrt{1-u}}\,du\sim 
\frac{\e^{-\frac{(r-r')^2}{2(1-u)}}}{\sqrt{1-u}}\big|_{u=0}\int_0^\infty\e^{-u|\ln\epsilon|}\,du
=\frac{\e^{-\frac{(r-r')^2}{2}}}{|\ln\epsilon|}.
\]
As explained in the introduction,  to prove (a) in Theorem \ref{thm:main} we would like to show that the scaled process  $(8\,\pi |\ln\epsilon|)^{\frac 14} Y_\eps$, where $Y_\eps$ was defined in \eqref{ypsilon}, is close, over  bounded intervals $[-K,K]$ to a (smooth) zero average 
Gaussian  process $\{Y(r):r\in \mathbb{R}\}$ with covariance function 
\begin{equation}
\label{covY}
 \mathbb E \big(Y(r)\,Y(r')\big)=e^{-\frac{(r-r')^2}{2}}.
\end{equation}   

With this in mind, we will first estimate the difference  $\big(X_\epsilon(r)-Y_\epsilon(r)\big)$, for $r$ not close to the boundary.
Recall that they are the scaled Gaussian  processes defined on the whole line, and on $[-\tilde L_\eps,\tilde L_\eps]$ respectively. 

\begin{lemma}\label{prop:convY}
 \label{x-y}Let $c\in(0,1)$ be given. Then, 
 \begin{equation}
 \label{xeps-yeps}
\lim_{\eps \to 0} \mathbb P\,\Big( \sup_{|r|\le c \tilde L_\eps}\,\big|\hat T_\epsilon^{\frac 14}
 \big(X_\epsilon(r)-Y_\epsilon(r)\big)\big|>
 \epsilon^{(1-c)^2}\Big) = 0. 
 \end{equation} 
 \end{lemma}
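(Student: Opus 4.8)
The plan is to realize $X_\eps$ and $Y_\eps$ on a common white noise (letting $Z$ in \eqref{defz0} integrate $W$ over all of $\mathbb R$ and $Z_\eps$ in \eqref{defZ} only over $[-L_\eps,L_\eps]$), so that $G_\eps(r):=\hat T_\eps^{1/4}\big(X_\eps(r)-Y_\eps(r)\big)=\hat T_\eps^{1/4}\sqrt\eps\,(Z-Z_\eps)(r\sqrt{|\ln\eps|},\hat T_\eps)$ is a single centered Gaussian process on $[-c\tilde L_\eps,c\tilde L_\eps]$. By Itô isometry its variance splits into an interior contribution carrying the kernel difference $(h-h_{L_\eps})$ over $y\in[-L_\eps,L_\eps]$ and an exterior one carrying $h$ over $|y|>L_\eps$. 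I would bound $\sup_{|r|\le c\tilde L_\eps}|G_\eps(r)|$ through Borell's inequality \eqref{Bineq}, for which the two inputs are the maximal variance $\bar\sigma^2(G_\eps)$ and the expected supremum, controlled by Dudley's bound \eqref{dudineq}.

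The heart is the variance estimate. Fix $\tau=\hat T_\eps-s$ and $x=r\sqrt{|\ln\eps|}$ with $|x|\le cL_\eps$. Reading \eqref{rp}, every image charge contributing to $h-h_{L_\eps}$ is centred outside $[-L_\eps,L_\eps]$ at distance at least $(1-c)L_\eps$ from any $y\in[-L_\eps,L_\eps]$ (the closest being the $j=0$ reflection $e^{-(x+y-2L_\eps)^2/2\tau}$), and likewise $|x-y|\ge(1-c)L_\eps$ on the exterior region $|y|>L_\eps$. Using $0<h-h_{L_\eps}\le h$ from \eqref{pineq}, a Gaussian tail estimate gives $\|h-h_{L_\eps}\|_\infty\lesssim\tau^{-1/2}e^{-(1-c)^2L_\eps^2/2\tau}$ and, via the reflection principle, $\int_{-L_\eps}^{L_\eps}(h-h_{L_\eps})\,dy\le\bP_x(\nu_{L_\eps}\le\tau)\lesssim\tfrac{\sqrt\tau}{L_\eps}e^{-(1-c)^2L_\eps^2/2\tau}$, whence both $\int_{-L_\eps}^{L_\eps}(h-h_{L_\eps})^2\,dy$ and $\int_{|y|>L_\eps}h^2\,dy$ are $\lesssim L_\eps^{-1}e^{-(1-c)^2L_\eps^2/\tau}$; the extra factor $L_\eps^{-1}$ from the tail prefactor is what ultimately provides the margin. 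Inserting this into $\bE[(Z-Z_\eps)^2]=\int_0^{\hat T_\eps}e^{2(\hat T_\eps-s)}[\cdots]\,ds$ and noting that the exponent $2\tau-(1-c)^2L_\eps^2/\tau$ is increasing in $\tau$, an endpoint (Laplace) analysis at $\tau=\hat T_\eps$ (with $L_\eps=|\ln\eps|$, $\hat T_\eps=\tfrac12|\ln\eps|$) yields $\sup_{|x|\le cL_\eps}\bE[(Z-Z_\eps)^2]\lesssim|\ln\eps|^{-1}\eps^{-1+2(1-c)^2}$. Hence $\bar\sigma^2(G_\eps)=\hat T_\eps^{1/2}\eps\sup_x\bE[(Z-Z_\eps)^2]\lesssim|\ln\eps|^{-1/2}\eps^{2(1-c)^2}$, so that $\bar\sigma(G_\eps)\lesssim|\ln\eps|^{-1/4}\eps^{(1-c)^2}$, strictly below the target threshold $\eps^{(1-c)^2}$.

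It then remains to control $\bE\sup|G_\eps|$ by Dudley's inequality \eqref{dudineq}. The decisive feature is that at the large time $\hat T_\eps\sim|\ln\eps|$ the relevant fields are smooth on the scale $\sqrt{\hat T_\eps}\sim\sqrt{|\ln\eps|}$ in $x$, i.e. on scale $O(1)$ in the variable $r$ (this is exactly the smooth limiting covariance $e^{-(r-r')^2/2}$ found in \eqref{Xeps}), and the leading boundary image varies on the same $x$-scale. Thus the interval $[-c\tilde L_\eps,c\tilde L_\eps]$, of length $\asymp\sqrt{|\ln\eps|}$, splits into $\asymp\sqrt{|\ln\eps|}$ correlation blocks; combining this with the increment bounds obtained as in \eqref{xdif2}--\eqref{tdif2} gives $\log N(\eps,\delta)\lesssim\ln\ln(1/\eps)+\log(\bar\sigma(G_\eps)/\delta)$, so that $\int_0^{\bar\sigma(G_\eps)}(\log N(\eps,\delta))^{1/2}d\delta\lesssim\bar\sigma(G_\eps)\sqrt{\ln\ln(1/\eps)}$. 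Therefore $\bE\sup_{|r|\le c\tilde L_\eps}|G_\eps(r)|\lesssim|\ln\eps|^{-1/4}\eps^{(1-c)^2}\sqrt{\ln\ln(1/\eps)}=o(\eps^{(1-c)^2})$. Applying \eqref{Bineq} with $\lambda=\eps^{(1-c)^2}$, we have $\lambda-\bE\sup|G_\eps|\sim\eps^{(1-c)^2}$ while $\bar\sigma^2(G_\eps)\lesssim|\ln\eps|^{-1/2}\eps^{2(1-c)^2}$, so the exponent is $\lesssim-\tfrac12|\ln\eps|^{1/2}\to-\infty$, which gives \eqref{xeps-yeps}.

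The main obstacle is the variance estimate of the second paragraph: one must verify that for $|x|\le cL_\eps$ all image and exterior contributions sit at distance $\ge(1-c)L_\eps$, and one must retain the Gaussian-tail prefactor $L_\eps^{-1}$ rather than only the bare exponential, since it is precisely the resulting $|\ln\eps|^{-1/4}$ gain in $\bar\sigma(G_\eps)$ (together with the cancellation of the $\hat T_\eps^{1/4}$ prefactor) that lets the expected supremum, inflated only by the mild factor $\sqrt{\ln\ln(1/\eps)}$ coming from the large-time smoothing, stay below the critical level $\eps^{(1-c)^2}$.
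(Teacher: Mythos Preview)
Your overall architecture --- compute the variance of $G_\eps:=\hat T_\eps^{1/4}(X_\eps-Y_\eps)$, bound the expected supremum via Dudley's entropy integral, and conclude with Borell's inequality --- coincides with the paper's approach. Your variance computation is carried out differently (you split the It\^o isometry into an interior piece with $(h-h_{L_\eps})^2$ and an exterior piece with $h^2$, and you keep the Gaussian tail prefactor $L_\eps^{-1}$), and this yields the sharper bound $\bar\sigma^2(G_\eps)\lesssim|\ln\eps|^{-1/2}\eps^{2(1-c)^2}$, whereas the paper, using the semigroup identity and the reflection formula for $h_{L_\eps}(x,x,\cdot)$ directly, obtains $\bar\sigma^2(G_\eps)\le K\eps^{2(1-c)^2}$.

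The gap is in your entropy step. You claim $\log N(\eps,\delta)\lesssim\ln\ln(1/\eps)+\log(\bar\sigma(G_\eps)/\delta)$, but the only increment estimates you invoke are \eqref{xdif2}--\eqref{tdif2}, which are H\"older-$\tfrac12$ in the spatial variable. Those give $\bE[G_\eps(r{+}h)-G_\eps(r)]^2\le K\hat T_\eps\,h$, hence $N(\delta)\lesssim\tilde L_\eps\hat T_\eps/\delta^2\asymp|\ln\eps|^{3/2}/\delta^2$, and the Dudley integral then produces $\bE\sup|G_\eps|\lesssim\bar\sigma(G_\eps)\bigl(\log(|\ln\eps|^{3/2}/\bar\sigma^2)\bigr)^{1/2}\asymp\bar\sigma(G_\eps)\,|\ln\eps|^{1/2}$. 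Even with your improved $\bar\sigma(G_\eps)\lesssim|\ln\eps|^{-1/4}\eps^{(1-c)^2}$ this is $\asymp|\ln\eps|^{1/4}\eps^{(1-c)^2}$, which exceeds the threshold $\eps^{(1-c)^2}$ and blocks the application of \eqref{Bineq}. Your heuristic that ``the fields are smooth on scale $O(1)$ in $r$'' is the right intuition, but it does not follow from \eqref{xdif2}--\eqref{tdif2}: what you actually need is a \emph{Lipschitz} increment bound for $G_\eps$ itself, of the form $\bE[G_\eps(r{+}h)-G_\eps(r)]^2\lesssim h^2\,|\ln\eps|^{1/2}\eps^{2(1-c)^2}$. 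This is provable --- for $|x|\le cL_\eps$ the kernel $h-h_{L_\eps}$ (interior) and $h$ restricted to $|y|>L_\eps$ (exterior) are $C^1$ in $x$ with $|\partial_x(\cdot)|\lesssim \tfrac{L_\eps}{\tau}e^{-(1-c)^2L_\eps^2/(2\tau)}$, so the short-time singularity responsible for the H\"older-$\tfrac12$ regularity of $Z$ and $Z_\eps$ individually is absent from their difference --- but it is an additional computation that your proposal does not carry out. Once that Lipschitz bound is in place, $N(\delta)\lesssim\tilde L_\eps\,|\ln\eps|^{1/4}\eps^{(1-c)^2}/\delta$ and Dudley gives $\bE\sup|G_\eps|\lesssim\bar\sigma(G_\eps)(\ln|\ln\eps|)^{1/2}=o(\eps^{(1-c)^2})$, exactly as you assert.
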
 

\medskip

 The proof of Lemma \ref{x-y} is given in the Appendix. 
 
 \medskip



\noindent {\it Proof of \rm{(a)} in Theorem \ref{thm:main}.}
From \eqref{Xeps}, the estimate given in Remark \ref{Xcont} of 
the Appendix,  standard properties of the moments of Gaussian random variables, and \cite[Corollary 14.9]{K}, we have that $\{(8\,\pi |\ln\epsilon|)^{\frac 14} X_\epsilon: 0<\eps<1\}$ is a tight family in $\mathbf{C}(\mathbb{R})$.   By Lemma \ref{x-y} we also have that $\{ (8\,\pi |\ln\epsilon|)^{\frac 14} Y_\eps: 0<\eps<1\}$ is tight.

Also from \eqref{Xeps}, the finite dimensional distributions of $\{(8\,\pi |\ln\epsilon|)^{\frac 14} X_\epsilon: 0<\eps<1\}$ converge as $\eps \downarrow 0$ to those of the zero average differentiable Gaussian process $Y = \{Y(r) : r\in \mathbb{R}\}$ with covariance $\e^{-\frac{(r-r')^2}{2}}$, therefore  convergence in distribution in $\mathbf{C}(\mathbb{R})$ holds. The differentiability follows from classical results on stochastic processes, see for instance Theorem 1 in Section 2 of Chapter V in \cite{GS}. Now Lemma \ref{prop:convY} implies that $\{(8\,\pi |\ln\epsilon|)^{\frac 14} Y_\eps: 0<\eps<1\}$ restricted to the compact interval $[-K,K]$ also converges in distribution to the restriction of $Y$ to the same interval. \hfill $\square$


\section{Stage I}\label{sec:stageI}
To simplify the writing in the computations that follow, we shall denote the   sup norm of  $F:[-L_\eps, L_\eps]\mapsto \mathbb R$ simply by $\|F\|$ 
 and for $ H:[-L_\eps, L_\eps] \times[0,T] \mapsto \mathbb R$,   $\|H\|_T$ refers to $\|H\|_{L_\eps,T}$ as in \eqref{normsupte}. 
 
\smallskip 
 We need to consider the Allen-Cahn equation with initial condition $\psi$  any continuous function on $[-L_\eps,L_\eps]$ satisfying Dirichlet boundary conditions. Extending \eqref{acdifeq}, 
$u_\eps(x,t;\psi)$ will denote the mild solution 
 to the stochastic equation with initial condition $\psi$ and Dirichlet boundary conditions.  It is the solution to the integral equation
  \begin{multline}
  \label{ieqconci}
  u_\eps(x,t;\psi)= 
  \int_{-L_\eps}^{L_\eps}
  \e^{t}\, h_{L_\eps}(x,y,t)\,\psi(y)\,dy
  \\
  +\sqrt\eps Z_\eps (x,t) -\int_0^t\int_{-L_\eps}^{L_\eps} \e^{t-s}\, h_{L_\eps}(x,y,t-s)\, u_\eps^3(y,s;\psi)\, dyds.
  \end{multline}
  We continue to denote by $u_\eps$ (without indication on the initial condition) the solution starting at the null function. In a similar fashion, we denote by 
  $v_\eps(x,t;\psi)$ the solution to the deterministic Allen-Cahn equation with initial condition $\psi$. It satisfies the integral equation 
  \begin{multline}
  \label{ieqv}
  v_\eps(x,t;\psi)= 
  \int_{-L_\eps}^{L_\eps}
  \e^{t}\, h_{L_\eps}(x,y,t)\,\psi(y)\,dy
  \\
  -\int_0^t\int_{-L_\eps}^{L_\eps} \e^{t-s}\, h_{L_\eps}(x,y,t-s)\, v_\eps^3(y,s;\psi)\, dyds.
  \end{multline}

The next lemma, based on a  comparison result from Buckdahn and Pardoux \cite{BP} is used to estimate
the solution $u_\eps$ of the stochastic Allen-Cahn equation  over times $(0,\hat T_\eps)$.

\begin{lemma}
\label{complemma}
Consider the  functions  $\varphi_1$ and  $\varphi_2$
 $\in C(\mathbb R)$, where   
\[
\varphi_1(u)=u^3\id_{u\ge 0},\quad \varphi_2(u)=u^3\id_{u\le 0}, 
\]
and let  $\bar u_1$ and $\bar u_2$ denote the corresponding mild solutions to the equations
\begin{equation}
\label{comp}
\displaystyle \frac{\partial u }{\partial t}=\frac 12   \frac{\partial^2 u }{\partial x^2}+u -\varphi (u)+\sqrt\epsilon \dot W,
\end{equation}
with $\varphi=\varphi_j$, $j=1,2$, both  with  initial condition $\bar u_0\in C[-L_\eps,L_\eps] $, and Dirichlet boundary conditions.
Then, if $u_\epsilon (\cdot\,; \bar u_0)$ denotes the solution to the stochastic Allen-Cahn equation with initial condition $\bar u_0$, 
\begin{itemize}
\item[\rm (a)] $\bar u_1(x,t)\le u_\eps(x,t; \bar u_0)\le \bar u_2(x,t)$ , a.e. on $[-L_\eps, \,L_\eps ]\times \mathbb R$. 
\item[\rm (b)] $\bar u_1(x,t)\le g_t\, \bar u_0(x)+\sqrt\epsilon Z_\eps(x,t)\le \bar u_2(x,t)$ a.e on
 $[-L_\eps, L_\eps]\times \mathbb R$, 
where 
\[
g f(x,t)=\int^{L_\eps}_{-L_\eps} 
\e^{t}\,h_{L_\eps}(x,y,t)\,f(y) dy .
\] 
\end{itemize}

\end{lemma}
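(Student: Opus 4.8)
The plan is to deduce both inequalities from the comparison theorem for white-noise driven SPDEs of Buckdahn and Pardoux \cite{BP}. To set it up I would write every equation involved in the common form $\partial_t u = \tfrac12\partial_{xx}u + f(u) + \sqrt\eps\,\dot W$, with reaction term $f(u)=u-\varphi(u)$, all carrying the same additive noise $\sqrt\eps\,\dot W$, the same Dirichlet boundary conditions, and the same initial datum $\bar u_0$. The solution $u_\eps(\cdot\,;\bar u_0)$ of \eqref{acdifeq} is the case $\varphi(u)=u^3$, the solutions $\bar u_1,\bar u_2$ are the cases $\varphi=\varphi_1,\varphi_2$, and---this is the point for part (b)---the process $g_t\bar u_0+\sqrt\eps Z_\eps$ is exactly the mild solution of the \emph{linear} equation, i.e.\ the case $\varphi\equiv 0$: indeed it is the right-hand side of \eqref{ieqconci} with the cubic integral term removed.

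The heart of the argument is the pointwise ordering of the reaction terms. A direct sign analysis of $\varphi_1(u)=u^3\id_{u\ge 0}$ and $\varphi_2(u)=u^3\id_{u\le 0}$ gives
\begin{equation*}
\varphi_2(u)\le 0\le\varphi_1(u)\qquad\text{and}\qquad \varphi_2(u)\le u^3\le\varphi_1(u)\qquad\text{for all } u\in\mathbb R.
\end{equation*}
Since $f=u-\varphi$ is nonincreasing in $\varphi$, the second chain yields $f_1\le f\le f_2$ and the first yields $f_1\le f_0\le f_2$, where $f_j(u)=u-\varphi_j(u)$, $f(u)=u-u^3$, and $f_0(u)=u$ is the linear reaction term. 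This is precisely the drift ordering needed to order the associated solutions.

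With these orderings I would apply \cite{BP} pairwise. For (a), $f_1\le f$ gives $\bar u_1\le u_\eps(\cdot\,;\bar u_0)$ and $f\le f_2$ gives $u_\eps(\cdot\,;\bar u_0)\le\bar u_2$; for (b), $f_1\le f_0\le f_2$ compared against the linear solution $g_t\bar u_0+\sqrt\eps Z_\eps$ gives both bounds at once. In each comparison the two equations share noise, diffusion coefficient, boundary conditions and initial datum, so only the drift ordering is used. The one delicate point is that $u^3$ and its truncations grow cubically and are merely locally Lipschitz, whereas the comparison theorem is cleanest for (one-sided) Lipschitz drifts; I expect this to be the main obstacle. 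I would resolve it by the usual localization---replace each $\varphi$ by a globally Lipschitz cutoff $\varphi^{(N)}$ that agrees with it on $[-N,N]$ and still satisfies the two displayed inequalities, apply \cite{BP} to the admissible truncated equations, and let $N\to\infty$---or, equivalently, by subtracting the common stochastic convolution $\sqrt\eps Z_\eps$ to reduce to a pathwise comparison for a random reaction-diffusion equation, where the dissipativity of $f$ and the a.s.\ finiteness and continuity of the mild solutions supplied by \eqref{ieqconci} let the inequalities pass to the limit and hold a.e.
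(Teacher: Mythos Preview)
Your approach is essentially the paper's: apply the Buckdahn--Pardoux comparison theorem pairwise, using the orderings $\varphi_2\le u^3\le\varphi_1$ for (a) and $\varphi_2\le 0\le\varphi_1$ for (b), and identify $g_t\bar u_0+\sqrt\eps Z_\eps$ as the mild solution of the linear equation $\varphi\equiv 0$.

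The one point worth correcting is your localization step. The result in \cite{BP} (their Lemma~3.3) is a \emph{monotonicity} method: it is formulated for equations $\partial_t u=\tfrac12\partial_{xx}u+g(u)-\varphi(u)+\dot W$ with $g$ Lipschitz and $\varphi$ \emph{nondecreasing}, and compares solutions when the nondecreasing parts are ordered. Here the Lipschitz piece is the common linear term $u\mapsto u$, and the relevant structural observation---which the paper states and you omit---is that $\varphi_1$, $\varphi_2$, $u\mapsto u^3$, and $0$ are all nondecreasing. This is exactly the hypothesis \cite{BP} needs, so the cubic growth is handled directly by the monotonicity framework and no truncation or pathwise reduction is required. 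Framing the comparison through the full drifts $f_j=u-\varphi_j$ obscures this, since $f_j$ is neither Lipschitz nor monotone; keeping the decomposition into Lipschitz plus nondecreasing parts is what makes the citation immediate.
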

\begin{proof}
(a) follows from \cite[Lemma 3.3]{BP}, after recalling that 
$u\mapsto -u$ is Lipschitz, and that $\varphi_2\le\varphi\le \varphi_1 $, where 
$\varphi(u)=u^3$, and that they are non decreasing functions.

(b) follows as well from the same result, since  $g(\bar u_0)(x,t)+\sqrt\epsilon Z_\eps(x,t)$ is the solution to 
equation \eqref{comp} in the case $\varphi\equiv 0$, and  $\varphi_2\le 0\le \varphi_1 $.  
\end{proof}
The integral equations corresponding to \eqref{comp} with initial condition $\bar u_0$   are
\[
u(x,t)+G\varphi_j(u)(x,t)=g \bar u_0(x,t)+\sqrt\epsilon Z_\eps(x,t):=\upsilon(x,t),\qquad j=1,2, 
\]
 where $G \varphi(u)(x,t)=\int_0^t\int^{L_\eps}_{-L_\eps} 
\e^{t-s}\,h_{L_\eps}(x,y,t-s)\varphi(u(y,s))dyds$. 
But, from Lemma  \ref{complemma}, after recalling that the $\varphi$'s  are non decreasing and that 
the kernel in $G$ is positive, 
\begin{align}
\nn
u_\eps\ge \bar u_1&=\upsilon-G(\varphi_1(\bar u_1 ))\ge \upsilon-G(\varphi_1(\upsilon ))
\\ \nn
u_\eps\le\bar  u_2&=\upsilon-G(\varphi_2(\bar u_2 ))\le \upsilon-G(\varphi_2(\upsilon)).
\end{align}
The common initial condition $\bar u_0$ above is not explicit in the notation. 
From the above and the definition of $\upsilon$,   we conclude that 
\begin{equation}
\label{u0z}
|u_\eps(x,t;\bar u_0)-g \bar u_0(x,t)-\sqrt\epsilon Z_\eps(x,t)|\le G\varphi(|\upsilon|)(x,t).  
\end{equation}
In  the particular case $\bar  u_0=0$, for each $T>0$ we obtain
\begin{equation}\label{eq:rev1}
\|u_\eps-\sqrt\eps Z_\epsilon\|_T\le\| G\big(|\sqrt\eps Z_\eps |^3\big)\|_T
\le \e^T\,\|\sqrt\eps Z_\epsilon\|^3_T.
\end{equation}
To simplify the notation, we consider the set
 $$\Gamma_\eps=\big\{\sqrt\epsilon\, \|Z_\eps\|_{\hat T_{\eps}/2}\le
\frac{\epsilon^{\frac 12-\frac{1}{4}}}{|\ln \epsilon|^{1/4}}\,
(\ln|\ln \epsilon|)^2\big\}.$$ From \eqref{ZTh} with $\rho=2$, 
we see that $\bP(\Gamma_\eps)\to 1$ 
as $\eps \to 0$. Thus, from the above estimation we obtain that, on $\Gamma_\eps$, 
\[
\|u_\eps-\sqrt\eps Z_\epsilon\|_{{\hat T_\eps}/2}\le
 \frac{\eps^{-\frac 14}\,\eps^{\frac 32 -\frac 34}}{|\ln \epsilon|^{3/4}}(\ln|\ln \epsilon|)^6, 
\]
and so,   on $\Gamma_\eps$
\begin{equation}
\nn
\|u_\eps\|_{{\hat T_\eps}/2}\le
\|u_\eps-\sqrt\eps Z_\epsilon\|_{{\hat T_\eps}/2}\,+\,
\|\sqrt\eps Z_\epsilon\|_{{\hat T_\eps}/2}\le
2\,\frac{\epsilon^{\frac{1}{4}}}{|\ln \epsilon|^{1/4}}\,
(\ln|\ln \epsilon|)^2,
\end{equation}
which means that 
 \begin{equation}
 \label{boundUT}
\mathbb P\Big( \| u_\eps\|_{{\hat T_\eps}/2}\le 2\, \frac{\epsilon^{\frac 14}}{|\ln \epsilon|^{1/4}}\,
(\ln|\ln \epsilon|)^2\Big)\to 1 \mbox{ as } \eps \to 0.
\end{equation}
Next, to  consider $u_\eps$ over  the interval $[{\hat T_\eps}/2,\Hat T_\eps]$, notice that $\tau_\eps:={\hat T_\eps}/2= \frac{|\ln\epsilon|}{4}$.  After recalling  that $\upsilon= g\bar u_0+\sqrt\epsilon Z_\eps$,  we obtain 
\begin{equation}
\nn
\|G\varphi(|\upsilon|)\|_{\tau_\eps}\le
    3\, \|G\big(|g\bar u_0 |^3\big)\|_{\tau_\epsilon}
+ 3\,  \|G\big(|\sqrt {\epsilon }Z_\epsilon|^3\big)\|_{\tau_\epsilon}
\end{equation}
Now, if $\displaystyle{\|\bar u_0\|<2\,\frac{ \epsilon^{\frac 14}}{|\ln \epsilon|^{\frac 14}}\,(\ln|\ln\epsilon|)^2}$, 
\[
\|G\big(|g\bar u_0 |^3\big)\|_{\tau_\epsilon}
\le \|\bar u_0\|^3 \int ^{\tau_\epsilon}_0 \e^{\tau_\epsilon-t}\,\e^{3t}\,dt\le 
\|\bar u_0\|^3 \e^{3\tau_\epsilon} 
\le \frac{8\,(\ln|\ln\epsilon|)^6}{|\ln \epsilon|^{\frac 34}}. 
\]
From the last inequality in \eqref{eq:rev1}  with   $T=\tau_\epsilon$, and \eqref{ZTh} with $\rho=2$, we know that 
\[
\mathbb P\big(\,\|G\big(|\sqrt {\epsilon }Z_\epsilon|^3\big)\|_{\tau_\epsilon}< \frac{\epsilon^{\frac 12}(\ln|\ln\epsilon|)^6}
{|\ln \epsilon|^{\frac 34}}\,\big)\to 1 \mbox{ as } \epsilon \to 0. 
\]
Therefore,
\begin{equation}
\label{boundUThat}
\|G\varphi(|\upsilon|)\|_{\tau_\eps}
\le 24\,\Big(\frac{(\ln|\ln \epsilon|)^6}{|\ln \epsilon|^{\frac 34}}\Big)
\end{equation}
in the case $\|\bar u_0\|\le 2\, \frac{\epsilon^{\frac 14}}{|\ln \epsilon|^{1/4}}\,
(\ln|\ln \epsilon|)^2$, with probability tending to $1$.

\begin{lemma} 
\label{lem:stage1} For each $K>0$ and $\vartheta>0$    
$$ 
\liminf_{\eps \to 0} 
\bP\,\Big(\, \inf_{r:|r|\le K, |Y_\eps(r)| > \vartheta |\ln\epsilon|^{-\frac 14}} |u_\epsilon(r\sqrt{|\ln \epsilon|},\hat T_\eps)|>
 \frac{\vartheta}{2|\ln \epsilon|^{\frac{1}{4}}} \Big) = 1.
$$
\end{lemma}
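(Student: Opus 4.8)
The plan is to reduce the lemma to a single uniform estimate, namely that at time $\hat T_\eps$ the solution $u_\eps$ stays close to its Gaussian part:
\[
\sup_{|x|\le L_\eps}\big|u_\eps(x,\hat T_\eps)-\sqrt\eps Z_\eps(x,\hat T_\eps)\big| = o\big(|\ln\eps|^{-\frac14}\big)
\]
with probability tending to one. Granting this, the lemma follows by the triangle inequality: since $Y_\eps(r)=\sqrt\eps Z_\eps(r\sqrt{|\ln\eps|},\hat T_\eps)$, on the set $\{|Y_\eps(r)|>\vartheta|\ln\eps|^{-\frac14}\}$ we get $|u_\eps(r\sqrt{|\ln\eps|},\hat T_\eps)|\ge |Y_\eps(r)|-o(|\ln\eps|^{-\frac14})>\frac{\vartheta}{2}|\ln\eps|^{-\frac14}$ for all small $\eps$, uniformly in $|r|\le K$.

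The obstacle is that a single-pass estimate over $[0,\hat T_\eps]$ is hopeless: the bound \eqref{eq:rev1} would give $\|u_\eps-\sqrt\eps Z_\eps\|_{\hat T_\eps}\le \e^{\hat T_\eps}\|\sqrt\eps Z_\eps\|_{\hat T_\eps}^3$, but $\e^{\hat T_\eps}=\eps^{-1/2}$ is enormous and $\|\sqrt\eps Z_\eps\|_{\hat T_\eps}$ is already of order one. I therefore split at the half-time $\tau_\eps:=\hat T_\eps/2=\frac{|\ln\eps|}{4}$, where the Gaussian part is still tiny. By \eqref{boundUT}, $\|u_\eps\|_{\tau_\eps}\le 2\eps^{\frac14}|\ln\eps|^{-\frac14}(\ln|\ln\eps|)^2$ with high probability, and by \eqref{eq:rev1} on the set $\Gamma_\eps$ we have $\|u_\eps-\sqrt\eps Z_\eps\|_{\tau_\eps}\le \eps^{\frac12}|\ln\eps|^{-\frac34}(\ln|\ln\eps|)^6$.

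I then restart the equation at $\tau_\eps$ with the random datum $\bar u_0:=u_\eps(\cdot,\tau_\eps)$, using the Markov property and \eqref{ieqconci}; the noise after $\tau_\eps$ produces a fresh Gaussian process $\tilde Z_\eps$, equidistributed with $Z_\eps$ and independent of $\bar u_0$. The key algebraic fact is the semigroup decomposition of the Gaussian term: splitting the stochastic integral \eqref{defZ} over $[0,\tau_\eps]$ and $[\tau_\eps,\hat T_\eps]$ and applying the Chapman–Kolmogorov identity for the kernel $\e^t h_{L_\eps}$ yields
\[
\sqrt\eps Z_\eps(\cdot,\hat T_\eps) = g\big(\sqrt\eps Z_\eps(\cdot,\tau_\eps)\big)(\cdot,\tau_\eps) + \sqrt\eps\,\tilde Z_\eps(\cdot,\tau_\eps),
\]
with $g$ as in Lemma \ref{complemma}(b). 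On the other hand, the comparison estimate \eqref{u0z} applied to the restarted solution gives $|u_\eps(\cdot,\hat T_\eps)-g\bar u_0(\cdot,\tau_\eps)-\sqrt\eps\tilde Z_\eps(\cdot,\tau_\eps)|\le G\varphi(|\upsilon|)(\cdot,\tau_\eps)$ with $\upsilon=g\bar u_0+\sqrt\eps\tilde Z_\eps$ and $\varphi(u)=u^3$. Subtracting the two displays gives the identity
\[
u_\eps(\cdot,\hat T_\eps)-\sqrt\eps Z_\eps(\cdot,\hat T_\eps) = g\big(\bar u_0-\sqrt\eps Z_\eps(\cdot,\tau_\eps)\big)(\cdot,\tau_\eps) + R,\qquad |R|\le G\varphi(|\upsilon|)(\cdot,\tau_\eps).
\]

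It remains to bound the two terms uniformly in $x$. For the first, since $\int h_{L_\eps}(x,y,\tau_\eps)\,dy\le 1$ we have $\|g(\bar u_0-\sqrt\eps Z_\eps(\cdot,\tau_\eps))(\cdot,\tau_\eps)\|\le \e^{\tau_\eps}\|u_\eps-\sqrt\eps Z_\eps\|_{\tau_\eps}\le \eps^{-\frac14}\cdot\eps^{\frac12}|\ln\eps|^{-\frac34}(\ln|\ln\eps|)^6=\eps^{\frac14}|\ln\eps|^{-\frac34}(\ln|\ln\eps|)^6$ on $\Gamma_\eps$, which is $o(|\ln\eps|^{-\frac14})$. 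For the second, the bound $\|\bar u_0\|\le 2\eps^{\frac14}|\ln\eps|^{-\frac14}(\ln|\ln\eps|)^2$ (holding with high probability by \eqref{boundUT}) is precisely the hypothesis under which \eqref{boundUThat} gives $\|G\varphi(|\upsilon|)\|_{\tau_\eps}\le 24(\ln|\ln\eps|)^6|\ln\eps|^{-\frac34}$ with probability tending to one, the noise estimate there being applicable because $\tilde Z_\eps$ is independent of $\bar u_0$. Both bounds are of order $(\ln|\ln\eps|)^6|\ln\eps|^{-\frac34}$, and multiplying by $|\ln\eps|^{\frac14}$ gives $(\ln|\ln\eps|)^6|\ln\eps|^{-\frac12}\to 0$, establishing the required $o(|\ln\eps|^{-\frac14})$ control. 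The main obstacle is exactly this two-stage structure — the cubic nonlinearity cannot be absorbed in one pass over $[0,\hat T_\eps]$ — together with the bookkeeping needed to ensure that the events from \eqref{boundUT}, $\Gamma_\eps$, and \eqref{boundUThat} (the last conditioned on $\bar u_0$ via the independent fresh noise) all hold simultaneously with probability tending to one.
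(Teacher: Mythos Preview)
Your proof is correct and follows essentially the same two-stage route as the paper: bound $\|u_\eps - \sqrt\eps Z_\eps\|$ on $[0,\tau_\eps]$ via \eqref{eq:rev1} and \eqref{boundUT}, restart at $\tau_\eps$ via the Markov property, and use \eqref{u0z}/\eqref{boundUThat} on the second half to obtain $\|u_\eps - \sqrt\eps Z_\eps\|_{\hat T_\eps} = O\big((\ln|\ln\eps|)^6|\ln\eps|^{-3/4}\big)$, whence the lemma by the triangle inequality with $Y_\eps$. Your explicit semigroup decomposition $\sqrt\eps Z_\eps(\cdot,\hat T_\eps)=g(\sqrt\eps Z_\eps(\cdot,\tau_\eps))(\cdot,\tau_\eps)+\sqrt\eps\tilde Z_\eps(\cdot,\tau_\eps)$ and the separate bound on the propagated first-half error $g(\bar u_0 - \sqrt\eps Z_\eps(\cdot,\tau_\eps))$ make transparent a step that the paper leaves implicit when passing from the conditional probability to the infimum over $\bar u_0$.
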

\begin{proof} We first show  that 
\begin{equation}
\label{umz}
\mathbb P \big(\|u_\epsilon-\sqrt\epsilon Z_\eps\|_{\hat T_\epsilon}<\frac{25\,(\ln|\ln\epsilon|)^6}{|\ln \epsilon|^{\frac 34}}\big)\to 1\mbox { as }
\epsilon \to 0.
\end{equation}
From \eqref{eq:rev1}--\eqref{boundUT}
we know that , as $\epsilon \to 0$,
\[
\mathbb P\big(\|u_\eps-\sqrt{\epsilon}Z_\epsilon\|_{\hat T_{\epsilon}/2}<2\,
\frac{\epsilon^{1/2}\,(\ln |\ln \epsilon|)^6}{|\ln\epsilon|^{1/4}} \big)\to 1
\]
and 
\[\mathbb P\big(\|u_\eps\|_{\hat T_{\epsilon}/2}<2\,
\frac{\epsilon^{1/4}\,(\ln |\ln \epsilon|)^2}{|\ln\epsilon|^{1/4}}\,\big) \to 1.
\]
Thus to prove \eqref{umz}  it  suffices to show that 
\[
\mathbb P \,\Big(\sup_{t\in[\hat T_{\epsilon}/2,\hat T_{\epsilon}]}
\|u_\epsilon(\cdot,t)-\sqrt\epsilon Z_\eps(\cdot,t)\|<
\frac{24\,(\ln|\ln\epsilon|)^6}{|\ln \epsilon|^{\frac 34}}\,\Big|\,\|u_\eps\|_{\hat T_{\epsilon}/2}<2\,
\frac{\epsilon^{1/4}\,(\ln |\ln \epsilon|)^2}{|\ln\epsilon|^{1/4}} \Big)
\]
tends to 1 as  $\epsilon \to 0$. 
From the Markov property,  the above probability is bounded from below 
by 
\begin{multline}
\nn
\inf_{\|\bar u_0\|\le 2\,
\frac{\epsilon^{1/4}\,(\ln |\ln \epsilon|)^2}{|\ln\epsilon|^{1/4}} }
\mathbb P\, \Big(\|u_\eps(\cdot,\cdot;\bar u_0)-g \bar u_0(\cdot,\cdot)-\sqrt \epsilon Z_\epsilon (\cdot,\cdot)\|_{\hat T_\epsilon /2}<\,
\frac{24\,(\ln |\ln \epsilon|)^6}{|\ln\epsilon|^{3/4}} \Big)
\\ 
\ge \inf_{\|\bar u_0\|\le 2\,
\frac{\epsilon^{1/4}\,(\ln |\ln \epsilon|)^2}{|\ln\epsilon|^{1/4}} } 
\mathbb P\, \Big(\|G|g\bar u_0-\sqrt \epsilon Z_{\epsilon}\,|^3\|_{\hat T_\epsilon /2} \le \frac{24\,(\ln |\ln \epsilon|)^6}{|\ln\epsilon|^{3/4}}\Big),
\end{multline}
where we have used \eqref{u0z} to obtain the last inequality (recall that $\varphi(u)=u^3$), valid for $\epsilon $ sufficiently small.  
Finally, \eqref{umz} follows from \eqref{boundUThat}. 

Recall then the definition \eqref{ypsilon} of $Y_\eps$ to write 
\begin{equation}
    \label{urge}
u_\epsilon(r\sqrt{|\ln\epsilon|},\hat T_\epsilon)
=u_\epsilon(r\sqrt{|\ln\epsilon|},\hat T_\epsilon)-\sqrt \epsilon Z_\epsilon (r\sqrt{|\ln\epsilon|},\hat T_\epsilon)
 +\,Y_\epsilon(r)
\end{equation}
To conclude the proof just observe that 
\[
|u_\epsilon(r\sqrt{|\ln \epsilon|},\hat T_\epsilon)-\sqrt \epsilon Z_\epsilon(r\sqrt{|\ln \epsilon|},\hat T_\epsilon)|<\frac{c}{|\ln \epsilon|^{\frac 14}}
\]
 for any given constant $c$ on a set with probability tending to one with $\epsilon$,  as it follows from \eqref{umz}. 

\end{proof}


\section{Stage II}\label{sec:stageII}

If $\phi$ and $\Hat \phi$ are real functions, we write $\phi =_L \Hat \phi$, if $\phi(x) = \Hat \phi(x)$ for all $x\in [-L,L]$, and $\phi \le_L \Hat \phi$, if $\phi(x) \le \Hat \phi(x)$ for all $x\in [-L,L]$.

\medskip

\begin{lemma}\label{lem:stageII}
Denote by $v_\epsilon(x,t;\phi_\epsilon)$ the solution of the deterministic Allen-Cahn  equation with initial condition  $\phi_\epsilon$,
\begin{equation}\label{detAC}
    \begin{cases}
    \frac{\partial v_\eps }{\partial t}=\frac 12   \frac{\partial^2 v_\eps }{\partial x^2}+
    v_\eps- v_\eps^3, \ \textrm{ in } [-L_\eps , L_\eps] \times [0,\infty), 
    \\ 
    v_\eps(-L_\eps,t)=v_\eps(L_\eps,t)=0, \ \forall \, t \in [0,\infty),
    \\ 
    v_\eps(x,0) = \phi_\epsilon,
    \end{cases}
\end{equation}
where $\phi_\epsilon:[-L_\epsilon,L_\epsilon]\to \mathbb{R}$ is a bounded continuous function. 
Then for every positive constants $M$, $\delta$, $b$, $K$, and $q<1$
$$
\lim_{\epsilon \downarrow 0} \bP \Big[ \sup_{\phi_\epsilon =_{K_\epsilon} u_\epsilon(\cdot,\Hat{T}_\eps) \atop \|\phi_\epsilon\| \le M} 
\sup_{|x| \le q K_\epsilon} 
\big| u_\epsilon(x,T_\eps^b) - v_\epsilon \big(x,\frac 14 \ln |\ln \epsilon| + b; \phi_\epsilon\big) \big| \ge \delta \Big] = 0 ,
$$
where
$K_\epsilon = K \sqrt{|\ln \epsilon|} \ll L_\epsilon$, $T^b_\eps$ is given by \eqref{tempo-final} and
$\phi =_{K_\epsilon} \Hat \phi$ means that $\phi(x) = \Hat \phi(x)$ for all $x\in [-K_\epsilon,K_\epsilon]$.
\end{lemma}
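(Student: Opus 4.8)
The plan is to restart the evolution at time $\hat T_\eps$ and split the comparison into a noise–removal estimate and a locality estimate. Write $\psi := u_\eps(\cdot,\hat T_\eps)$ and $\Delta_\eps := \tfrac14\ln|\ln\eps| + b$, so that $T^b_\eps = \hat T_\eps + \Delta_\eps$ by \eqref{tempo-final} and $\e^{\Delta_\eps} = \e^{b}|\ln\eps|^{1/4}$. First I would use the semigroup property of the kernel $\e^{t}h_{L_\eps}$ to split the time integral in \eqref{ieqconci} at $\hat T_\eps$: for $s\ge 0$ this gives, in the notation of \eqref{ieqconci},
\[
u_\eps(\cdot,\hat T_\eps+s) = u_\eps(\cdot,s;\psi) = g\psi(\cdot,s) + \sqrt\eps\,\tilde Z_\eps(\cdot,s) - G\big(u_\eps^3(\cdot\,;\psi)\big)(\cdot,s),
\]
where $\tilde Z_\eps(x,s) = \int_0^s\int \e^{s-r}h_{L_\eps}(x,y,s-r)\,d\tilde W_{y,r}$ with $\tilde W_{y,r} := W_{y,\hat T_\eps+r} - W_{y,\hat T_\eps}$ a space–time white noise independent of $\mathcal F_{\hat T_\eps}$; in particular $\tilde Z_\eps(\cdot,s)$ has the same law as $Z_\eps(\cdot,s)$ of \eqref{defZ}. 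Thus $u_\eps(\cdot,T^b_\eps)$ is the time–$\Delta_\eps$ value of a fresh stochastic Allen–Cahn evolution started from $\psi$.

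Next I would introduce a good event $A_\eps$ with $\bP(A_\eps)\to1$ on which: (i) $\|\psi\|\le 1$, which follows from \eqref{umz} together with the bound on $\sqrt\eps\|Z_\eps\|_{\hat T_\eps}$ obtained from \eqref{esup} and \eqref{Bineq} (both of order $|\ln\eps|^{-1/4}\ln|\ln\eps|$); and (ii) $\eta_\eps := \sqrt\eps\,\|\tilde Z_\eps\|_{\Delta_\eps} \le \sqrt\eps\,|\ln\eps|^{1/4}(\ln|\ln\eps|)^{2}$, which follows from exactly the Borell–Dudley argument that produced \eqref{esup} and \eqref{ZTh}, now applied with $T=\Delta_\eps$ so that $\e^{T}=\e^{b}|\ln\eps|^{1/4}$. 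In particular $\eta_\eps\to0$, and on $A_\eps$ we have $\|\psi\|\le M$, so the admissible set of $\phi_\eps$ is nonempty and contains $\psi$. On $A_\eps$ I would record two a priori bounds. Writing $w := u_\eps(\cdot,\cdot;\psi) - \sqrt\eps\,\tilde Z_\eps$, the function $w$ solves the deterministic equation $w_t = \tfrac12 w_{xx} + w - (w + \sqrt\eps\,\tilde Z_\eps)^3$ with $w(\cdot,0)=\psi$ and zero boundary values; comparing with spatially constant super/sub-solutions of $\dot a = a - (a\mp\eta_\eps)^3$ gives $\sup_{s\le\Delta_\eps}\|u_\eps(\cdot,s;\psi)\|\le\max(M,2)=:C$. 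The same comparison applied to \eqref{ieqv} yields $\|v_\eps(\cdot,s;\phi_\eps)\|\le\max(\|\phi_\eps\|,1)\le\max(M,1)$ for every admissible $\phi_\eps$.

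I would then decompose
\[
u_\eps(\cdot,\Delta_\eps;\psi) - v_\eps(\cdot,\Delta_\eps;\phi_\eps) = D + E,\qquad
D := u_\eps(\cdot,\Delta_\eps;\psi)-v_\eps(\cdot,\Delta_\eps;\psi),\quad
E := v_\eps(\cdot,\Delta_\eps;\psi)-v_\eps(\cdot,\Delta_\eps;\phi_\eps).
\]
For the noise–removal term $D$, subtracting the integral equations \eqref{ieqconci} and \eqref{ieqv} for the common initial condition $\psi$, using $|a^3-c^3|\le 3C^2|a-c|$ on $[-C,C]$ and $\int h_{L_\eps}(x,y,\tau)\,dy\le1$, gives $\|D(\cdot,s)\|\le\eta_\eps + 3C^2\int_0^s\e^{s-r}\|D(\cdot,r)\|\,dr$, whence Gronwall's inequality yields
\[
\|D(\cdot,\Delta_\eps)\| \le \eta_\eps\,\e^{(1+3C^2)\Delta_\eps}
\le \e^{(1+3C^2)b}\,\sqrt\eps\,|\ln\eps|^{(1+3C^2)/4}(\ln|\ln\eps|)^2 \longrightarrow 0,
\]
uniformly over $[-L_\eps,L_\eps]$, the polylogarithmic growth being crushed by $\sqrt\eps$.

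The locality term $E$ is the crux. Since $\psi =_{K_\eps}\phi_\eps$, the initial difference $\psi-\phi_\eps$ is supported in $\{|y|>K_\eps\}$ and bounded by $2M$. Now $E$ solves the linear equation $E_t = \tfrac12 E_{xx} + (1-p)E$ with $p := v_\eps^2(\cdot;\psi)+v_\eps(\cdot;\psi)v_\eps(\cdot;\phi_\eps)+v_\eps^2(\cdot;\phi_\eps)\ge0$; because the zeroth–order coefficient carries the favourable sign, $|E|$ is a subsolution of $\partial_t-\tfrac12\partial_{xx}-1$ (Kato's inequality), so by the maximum principle and $h_{L_\eps}\le h$ (see \eqref{pineq}),
\[
|E(x,s)| \le \e^{s}\!\int_{|y|>K_\eps}\! h_{L_\eps}(x,y,s)\,|\psi-\phi_\eps|(y)\,dy
\le 2M\,\e^{\Delta_\eps}\,\bP\big(|x+\sqrt{s}\,N|>K_\eps\big),\quad N\sim\mathcal N(0,1).
\]
For $|x|\le qK_\eps$ and $s\le\Delta_\eps$ this is at most $4M\,\e^{b}|\ln\eps|^{1/4}\exp\{-(1-q)^2K^2|\ln\eps|/(2\Delta_\eps)\}$, which tends to $0$ since $(1-q)^2K^2|\ln\eps|/\Delta_\eps$ diverges like $|\ln\eps|/\ln|\ln\eps|$ and overwhelms the prefactor — and crucially this bound is uniform over all admissible $\phi_\eps$. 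Combining, on $A_\eps$ the double supremum in the statement is at most $\|D(\cdot,\Delta_\eps)\| + \sup_{\phi_\eps}\sup_{|x|\le qK_\eps}|E(x,\Delta_\eps)|$, which tends to $0$, hence is eventually $<\delta$; therefore the probability in question is bounded by $\bP(A_\eps^{c})\to0$. The main obstacle is precisely the control of $E$: a direct Gronwall estimate over the whole interval fails because $|E|$ is of order one near $\pm L_\eps$, and it is the good sign of the cubic difference $v_\eps^3(\cdot;\psi)-v_\eps^3(\cdot;\phi_\eps)=pE$ that allows one to discard the nonlinearity, replace the flow by the linear heat flow, and exploit the Gaussian decay of the kernel.
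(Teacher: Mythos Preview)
Your proof is correct, but the route differs from the paper's in one interesting point: the treatment of the locality term. You split $u_\eps(\cdot,\Delta_\eps;\psi)-v_\eps(\cdot,\Delta_\eps;\phi_\eps)$ into a noise–removal piece $D$ (handled by sup-norm Gronwall) and a purely deterministic piece $E=v_\eps(\cdot;\psi)-v_\eps(\cdot;\phi_\eps)$, and for $E$ you exploit the favourable sign $p=v_\eps^2(\psi)+v_\eps(\psi)v_\eps(\phi_\eps)+v_\eps^2(\phi_\eps)\ge0$ so that $|E|$ is a subsolution of $\partial_t-\tfrac12\partial_{xx}-1$ and is therefore dominated by $\e^{t}\int h_{L_\eps}(x,y,t)|E_0|(y)\,dy$. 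The paper instead keeps $u_\eps-v_\eps(\phi_\eps)$ as a single object and iterates the pointwise integral inequality using the semigroup identity $\int h_{L_\eps}(x,y,t-s)h_{L_\eps}(y,z,s)\,dy=h_{L_\eps}(x,z,t)$; this preserves the spatial support of the initial difference (so the Gaussian tail for $|x|\le qK_\eps$ still applies) at the price of an extra factor $\e^{3(M+1)^2\Delta_\eps}$, which is only a harmless power of $|\ln\eps|$. So your remark that ``a direct Gronwall over the whole interval fails'' is right for the \emph{sup-norm} Gronwall, but the paper circumvents this by a pointwise iteration rather than by the sign structure. Your argument is slightly cleaner on the $E$ term (no extra exponential factor), while the paper's single-pass iteration avoids the $D/E$ split and relies only on the a priori bounds $\|v_\eps\|\le M+1$, $\|u_\eps\|\le2$ (which it quotes from \cite{BDP}) rather than the monotonicity of the cubic.
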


\medskip
\begin{proof}
From \eqref{umz} and \eqref{ZTh} with $\rho=0$, we know that 
\[
\mathbb P \big(\|u_\epsilon\|_{\hat T_\epsilon}<\frac{(\ln|\ln \epsilon|)^2}{|\ln \epsilon|^{1/4}} \big)\to 1
\mbox{ as }\epsilon \to 0.
\]
Let $\hat {\tau}^b_\epsilon=\frac14 \ln|\ln \epsilon|+b$. 
From the Markov property, to prove the lemma it is enough to show that, given 
$M,\delta, b, K$ and $q$ as in the statement, 
\[
\sup_{\|\bar u_0\|<\frac{(\ln|\ln \epsilon|)^2}{|\ln \epsilon|^{1/4}}}
\bP \Big[ \sup_{\phi_\epsilon =_{K_\epsilon} \bar u_0 \atop \|\phi_\epsilon\| \le M} 
\sup_{|x| \le q K_\epsilon} 
\big| u_\epsilon(x,\hat\tau_\eps^b;\bar u_0) - v_\epsilon \big(x,\hat\tau_\eps^b; \phi_\epsilon\big) \big| \ge \delta \Big] 
\]
tends to zero with $\epsilon$. 
For this recall that if $\|\phi_\epsilon\|\le M$, then  
$\|v_\epsilon (\cdot,t;\phi_\epsilon)\|
\le M+1$ for all $t\ge 0$, and that if $\|\bar u_0\|\le 1$, then 
$\mathbb P \big(\|u_\epsilon(\cdot,\cdot;\bar u_0)\|_{\hat\tau_\epsilon^b}>2\big)\to 0$ as $\epsilon \to 0$. (For these a priori estimates, see Propositions 5.1 and 5.2 in \cite {BDP}.) 
Then, from  the integral equations \eqref{ieqconci} and \eqref{ieqv} for $u_\epsilon(\cdot;\bar u_0)$ and $v_\epsilon(\cdot;\phi_\epsilon)$  we have
that, for any $t\le \hat \tau _\eps^b$, 
\begin{multline}
\nn
|u_\epsilon(x,t;\bar u_0 ) - v_\epsilon(x,t; \phi_\epsilon)| \le 
\int_L^L e^{t } h_L(x,y,t) |\bar u_0(y)-\phi_\epsilon(y)|\,dy\\
+\sqrt \epsilon| Z_\epsilon (x,t)|+\int_0^{\hat t} \int_{-L}^L e^{t-s} h_L(x,y,t -s) |v^3_\epsilon(y,s;\phi_\epsilon) - u^3_\epsilon(y, s;\bar u_0)|\,dyds.
\end{multline}
From the above estimates for $u_\eps$ and $v_\eps$, we have  
\begin{multline}
\nn
|u_\epsilon(x,t;\bar u_0 ) - v_\epsilon(x,t; \phi_\epsilon)| 
\\ 
\le \int_{|y|>K_\epsilon} e^{t } h_L(x,y,t) |\bar u_0(y)-\phi_\epsilon(y)|\,dy
+\sqrt \epsilon| Z_\epsilon (x,t)|
\\
+3(M+1)^2\int_0^{t} \int_{-L}^L e^{t -s} h_L(x,y,t-s) |u_\epsilon(y,s;\phi_\epsilon) - v_\epsilon(y, s;\bar u_0)|\,dyds.
\end{multline}
Iterating this inequality and  using the semigroup property we obtain that for   any $t\le \hat \tau _\eps^b$
\begin{align}
\nn
|u_\epsilon(x,t;\bar u_0 ) - v_\epsilon(x,t; \phi_\epsilon)| \le \,&\e^{3(M+1)^2\,t}\int_L^L e^{t } h_L(x,y,t) |\bar u_0(y)-\phi_\epsilon(y)|\,dy
\\ \label{n1}
&+\e^{(3(M+1)^2+1)\,t}\, \sqrt \epsilon \|Z_\eps\|_t.
\end{align}
Observe next that 
\[
 \sup_{|x| \le q K_\epsilon} \int_L^L e^{\hat\tau_\epsilon^b} h_L(x,y,\hat\tau_\epsilon^b) |\bar u_0(y)-\phi_\epsilon(y)|\,dy \le 
  \sup_{|x| \le q K_\epsilon} 
\int_{|y|>K_\epsilon} \!\!e^{\hat\tau_\epsilon^b} h_L(x,y,\hat\tau_\epsilon^b) dy, 
\]
thus,  using \eqref{pineq}, we have that for suitable  positive constants $c$ and $C$, 
$$
 \e^{3\,(M+1)^2\,\hat\tau_\epsilon^b}\sup_{|x| \le q K_\epsilon} 
\int_{|y|>K_\epsilon} e^{\hat\tau_\epsilon^b} h_L(x,y,\hat\tau_\epsilon^b) dy \le C |\ln \eps| e^{- c \sqrt{|\ln \eps|}} , \ \forall \, \eps \in (0,1], 
$$
and from \eqref{esup}, 
\begin{equation}
\nn
\lim_{\eps \to 0} \bP \big( \e^{(3(M+1)^2+1)\,\hat\tau_\epsilon^b}\, \sqrt \epsilon \|Z_\eps\|_{\hat\tau_\epsilon^b} \le C |\ln \eps| \,e^{- c \sqrt{|\ln \eps|}}\, \big) = 1.
\end{equation}
The lemma follows from \eqref{n1} with $t=\hat\tau_\epsilon^b$  and the last two estimates. 
\end{proof}

\begin{lemma} \label{lemma-detAC1}
Let $v_\epsilon(x,t;\phi_\eps)$ be the solution of \eqref{detAC} with initial condition  $\phi_\eps \in \mathbf{C}(-L_\eps,L_\eps)$ such that  $|\phi_\eps|\le 1$ and  
$|\phi_\eps| \in  \big(\frac{\vartheta}{|\ln \eps|^{\frac{1}{4}}},1\big)$ 
in $[-K_\eps,K_\eps]$ for some $\vartheta>0$ and $K_\eps = K\sqrt{|\ln \eps|}$. Then, given $\varrho>0$ there exists a constant $b = b(\varrho)$ such that  for every $q<1$
$$
\limsup_{\eps \to 0} \sup_{|x| \le q K_\eps}  \Big| v_\epsilon \big(x,\frac{1}{4} \ln |\ln \eps| + b; \phi_\eps \big) - \sgn(\phi_\eps(x)) \Big| \le \varrho .
$$
\end{lemma}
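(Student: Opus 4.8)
The plan is to prove the statement by the comparison principle for the deterministic equation \eqref{detAC}, trapping $v_\eps$ between the constant supersolution $1$ and a \emph{localized} subsolution assembled from the reaction ODE. Throughout put $\tau_\eps:=\tfrac14\ln|\ln\eps|+b$ and $a:=\vartheta|\ln\eps|^{-1/4}$.

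I would begin with two elementary reductions. Since $\phi_\eps$ is continuous and $|\phi_\eps|>a>0$ throughout the connected interval $[-K_\eps,K_\eps]$, the intermediate value theorem forces $\sgn(\phi_\eps)$ to be constant there; using the symmetry $v\mapsto-v$ of \eqref{detAC} I may assume we are in the positive case $\phi_\eps\ge a$ on $[-K_\eps,K_\eps]$, so that $\sgn(\phi_\eps)\equiv1$ on $[-qK_\eps,qK_\eps]$ and it suffices to prove $v_\eps(x,\tau_\eps;\phi_\eps)\ge1-\varrho$ for $|x|\le qK_\eps$. The matching upper bound is free: the constants $1$ and $-1$ are respectively a super- and a subsolution of \eqref{detAC} which dominate, and are dominated by, both the Dirichlet datum $0$ and $\phi_\eps$ (recall $|\phi_\eps|\le1$), so comparison gives $|v_\eps|\le1$ everywhere.

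For the lower bound I would use the subsolution
\[
\underline v(x,t):=w_a(t)-\xi(x,t),
\]
where $w_a$ solves the reaction ODE $\dot w=w-w^3$, $w_a(0)=a$, explicitly $w_a(t)=a\,\e^{t}/\sqrt{1-a^2+a^2\e^{2t}}$, and where $\xi\ge0$ is the whole–line solution of the linearized equation $\partial_t\xi=\tfrac12\partial_{xx}\xi+\xi$ with initial datum $\xi(\cdot,0)=2\,\id_{\{|x|>K_\eps\}}$. A short computation using $\dot w_a=w_a-w_a^3$ and the equation for $\xi$ collapses the subsolution inequality on $[-L_\eps,L_\eps]$ to
\[
\partial_t\underline v-\tfrac12\partial_{xx}\underline v-\underline v+\underline v^{3}=(w_a-\xi)^3-w_a^3\le0,
\]
the inequality holding simply because $\xi\ge0$. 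On the parabolic boundary: at $t=0$ one has $\underline v=a\le\phi_\eps$ on $[-K_\eps,K_\eps]$ and $\underline v=a-2<-1\le\phi_\eps$ outside; at $x=\pm L_\eps$, since $L_\eps=|\ln\eps|\gg K_\eps$ one has $\xi(\pm L_\eps,t)=2\e^{t}\,\bP_{\pm L_\eps}(|B_t|>K_\eps)\ge\e^{t}\ge1\ge w_a(t)$ for $\eps$ small, uniformly in $t\le\tau_\eps$, whence $\underline v(\pm L_\eps,t)\le0=v_\eps(\pm L_\eps,t)$. Thus comparison yields $v_\eps\ge\underline v$ on $[-L_\eps,L_\eps]\times[0,\tau_\eps]$.

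It remains to evaluate at $t=\tau_\eps$. The key observation is that $a\,\e^{\tau_\eps}=\vartheta\e^{b}$ is \emph{independent of} $\eps$ while $a\to0$, so
\[
w_a(\tau_\eps)\ \longrightarrow\ \frac{\vartheta\e^{b}}{\sqrt{1+\vartheta^2\e^{2b}}}\qquad(\eps\to0),
\]
which exceeds $1-\varrho/2$ once $b=b(\varrho,\vartheta)$ is taken large; this is exactly why the time $\tfrac14\ln|\ln\eps|$ and the exponent $1/4$ match, since $\tfrac14\ln|\ln\eps|$ is precisely the time the linear growth rate $\e^{t}$ needs to amplify an amplitude of order $|\ln\eps|^{-1/4}$ back to order one. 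For $|x|\le qK_\eps$ a Gaussian tail estimate gives
\[
\xi(x,\tau_\eps)\le2\e^{\tau_\eps}\,\bP_x(|B_{\tau_\eps}|>K_\eps)\le C\,|\ln\eps|^{1/4}\exp\!\Big(-\tfrac{(1-q)^2K^2|\ln\eps|}{2\tau_\eps}\Big)\to0,
\]
since $\tau_\eps\sim\tfrac14\ln|\ln\eps|\ll K_\eps^2=K^2|\ln\eps|$. Combining, $v_\eps(x,\tau_\eps)\ge w_a(\tau_\eps)-\xi(x,\tau_\eps)\ge1-\varrho$ uniformly on $|x|\le qK_\eps$ for $\eps$ small, which with $v_\eps\le1$ settles the positive case; the negative case follows by $v\mapsto-v$. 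The only genuinely delicate point is this last localization step: controlling the boundary-layer correction $\xi$ on $[-qK_\eps,qK_\eps]$, where the restriction $q<1$ together with the separation of scales $\tau_\eps\ll K_\eps^2$ is what makes the contamination from the uncontrolled region $|x|>K_\eps$ (and from the Dirichlet boundary) negligible.
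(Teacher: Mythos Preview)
Your argument is correct and takes a genuinely different route from the paper's proof.

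The paper introduces an auxiliary whole-line problem: it extends $\phi_\eps$ to $\psi_\eps$ on $\mathbb{R}$ by continuation as constants, uses monotonicity of the whole-line Allen--Cahn flow to trap $\hat v(\cdot,\cdot;\psi_\eps)$ between the ODE orbits $w(t;\pm a)$, and then proves \eqref{hat-v2} that $\hat v$ and the Dirichlet solution $v_\eps$ differ by $o(1)$ on $[-qK_\eps,qK_\eps]$. This last step is done by writing $v_\eps$ as the whole-line solution with odd-periodically reflected initial datum $\tilde\phi_\eps$, subtracting the two integral equations, and iterating (a Gr\"onwall-type argument) to get $D_t(x)\le C\e^{4t}\int_{|y|\ge K_\eps}h(x,y,t)\,dy$.

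You bypass the auxiliary whole-line problem entirely by building an explicit subsolution $\underline v=w_a-\xi$ directly for the Dirichlet problem, where the single correction term $\xi$ simultaneously handles the uncontrolled region $|x|>K_\eps$ and the Dirichlet boundary at $\pm L_\eps$. The final localization estimate for $\xi$ on $[-qK_\eps,qK_\eps]$ is morally the same Gaussian tail bound that the paper obtains for $D_t$, but you arrive there in one step rather than through comparison of two auxiliary problems plus an iteration. Your approach is more self-contained and makes the role of the scale separation $\tau_\eps\ll K_\eps^2\ll L_\eps^2$ very transparent; the paper's approach has the mild advantage that the monotonicity with respect to constant data is invoked as a known black box rather than verified via a subsolution computation. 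One small remark: your $b$ depends on $\vartheta$ as well as $\varrho$ (as does the paper's, though the statement writes only $b(\varrho)$), and the discontinuity of $\xi(\cdot,0)$ at $\pm K_\eps$ is harmless but strictly speaking calls for a one-line approximation argument when invoking comparison.
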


\begin{proof} Let $\phi_\eps$ be given as in the statement, and  consider the deterministic Allen-Cahn equation on the whole line 
\begin{equation}\label{detACline}
    \begin{cases}
    \frac{\partial \hat v }{\partial t}=\frac 12   \frac{\partial^2 \hat v }{\partial x^2}+
     \hat v- \hat v^3  , \ \textrm{in } \mathbb{R} \times [0,\infty),
    \\ 
    \hat v (x,0) = \psi_\eps(x) , \ x\in \mathbb{R},
    \end{cases}
\end{equation}
with  initial condition $\psi_\eps$ satisfying 
\[
\psi_\eps=_{K_\eps}\phi_\eps\qquad \mbox{and }\psi_\eps(x)=
\begin{cases}\phi_\eps(-K_\eps) &\mbox{ if } x\le -K_\eps\\
\phi_\eps(K_\eps) &\mbox{ if } x\ge K_\eps.
\end{cases}
\]
Its solution $\hat v(x,t;\psi_\eps)$ satisfies the integral equation
\begin{equation}\label{detAClinesol}
\hat v(x,t;\psi_\eps)  = (g_t \psi_\eps) (x) - \int_0^t \int_{-\infty}^\infty e^{t-s} h(x,y,t-s) \hat v^3_\epsilon (y,s;\psi_\epsilon) dy ds . 
\end{equation} 
For a constant initial condition $a$ such that $|a|\in(0,1)$, we point out that $\hat v(x,t;a)=w(t;a)$, where $w(\cdot;a)$ is the solution of the ordinary equation
\[
\dot w=w-w^3,\qquad w(0)=a ,
\]
which is given explicitly by
\begin{equation}
\label{wt}
w(t;a)=\frac{\sgn (a)}{\sqrt{1+\e^{-2t}(\frac{1}{a^2}-1)}}.
\end{equation}
 By monotonicity properties of the solutions to equation \eqref{detACline},
 we know that, $\forall x\in [-L_\eps,L_\eps] $ and $\eps$ sufficiently small to guarantee that $\vartheta/|\ln \eps|^{1/4} < 1$, 
 \begin{align}
 \nn
 1> \hat v(x,t;\psi_\eps)&\ge w(t;\frac{\vartheta}{2|\ln \eps|^{1/4}})\,>0 \mbox{ if $\psi_\eps>0$},
 \\ \nn 
 -1 < \hat v(x,t;\psi_\eps)&\le w(t;\frac{-\vartheta}{2|\ln \eps|^{1/4}})\,<0 \mbox{ if $\psi_\eps<0$}.
 \end{align}
 Therefore, from \eqref{wt}, given $\varrho>0$, there exists $b>0$ such that 
 \begin{equation}
\label{hat-v1}
\limsup_{\eps \to 0} \inf_{|x| \le L_\eps} \Big| \hat v \big(x,\frac{1}{4} \ln |\ln \eps| + b ; \psi_\eps \big) \Big| \ge 1-\frac{\varrho}{2}.
\end{equation}
Moreover, we prove next  that 
\begin{align} \label{hat-v2}
 \limsup_{\eps \to 0} \sup_{|x| \le q K_\eps} \Big|   
\hat v \big(x,& \frac{1}{4} \ln |\ln \eps| + b ; \psi_\eps\big)
\\ \nn
& -v_\epsilon \big(x,\frac{1}{4} \ln |\ln \eps| + b ; \phi_\eps 
\big) \Big| = 0.
\end{align}
Indeed, given a continuous function   $\phi$ satisfying Dirichlet boundary conditions on $[-L,L]$, let us define the reflected  function  
$\tilde{\phi}$ on $(-\infty, \infty)$ as follows:
\[
\tilde\phi(x)=\begin{cases}\quad \phi(x)&\mbox { if } x\in [-L,L]\\
-\phi(2L-x)&\mbox { if } x\in [L,3L]
\end{cases}, \quad \mbox{ and } \tilde\phi \mbox{ is periodic with period $4L$}.
\] 
Then, it can be seen from \eqref{rp} that the solution to the Allen-Cahn equation on the whole line with initial condition $\tilde\phi$ solves   the Allen-Cahn equation \eqref{acdeteq} with Dirichlet boundary conditions and initial condition $\phi$ for $x\in [-L,L]$.  
Therefore, we have that, for any $t\ge 0$ and $x\in[-L_\epsilon,L_\epsilon]$, $v_\eps(x,t;\phi_\eps)=
\hat v(x,t;\tilde \phi_\eps)$ and 
\begin{align}
\label{Dif}
\hat v(x,t;\psi_\eps) -&\hat v(x,t;\tilde \phi_\eps) =
\int_{-\infty}^\infty e^{t} h(x,y,t)\big(\psi_\eps-\tilde\phi_\eps \big) (x)
\\ \nn
-& \int_0^t \int_{-\infty}^\infty e^{t-s} h(x,y,t-s)
\big( \hat v^3 (y,s;\psi_\epsilon)-\hat v^3 (y,s;\tilde \phi_\epsilon)\big)
dy ds .
\end{align}
Denote $D_t(x)=|\hat v(x,t;\psi_\eps) -\hat v(x,t;\tilde\phi_\eps)|$, and observe that, since both 
$\|\psi_\eps\|<1$ and $\|\phi_\eps\|<1$, so are 
$\|\hat v(\cdot,t;\psi_\epsilon)\|<1$ and 
$\|\hat v(\cdot,t;\tilde\phi_\epsilon)\|<1$, and then  
$| \hat v^3 (y,s;\psi_\epsilon)-\hat v^3 (y,s;\tilde \phi_\epsilon) |\le 3 D_t(x)$. 
From \eqref{Dif} we thus obtain 
\begin{equation}
\nn
D_t(x)\le 
\int_{-\infty}^\infty e^{t} h(x,y,t) D_0(y)\,dy
+3\int_0^t \int_{-\infty}^\infty e^{t-s} h(x,y,t-s)
D_s(y)
dy ds 
\end{equation}
By iteration, from the semigroup property,  it follows that for some constant $C>0$ 
\[
D_t(x)\le C\e^{3t}\int_{-\infty}^\infty e^{t} h(x,y,t) D_0(y)\,dy\le \,2 C\e^{4t} \int_{|y|\ge K_\eps }h(x,y,t)dy,
\]
 since $D_0(y)=0$ if $|y|\le K_\eps$. Thus,
\[
\sup_{|x| \le q K_\eps}  D_{\hat \tau^b_\eps}(x)\le k
|\ln \eps|\,\e^{-\frac{A^2 (1-q)^2|\ln \eps|}{\ln |\ln \eps|}},
\]
for some constant $k>0$ and \eqref{hat-v2} follows.
To finish the proof, observe that  $\sgn \phi_\epsilon (x)=
\sgn \psi_\epsilon (x)$ if $|x|\le K_\epsilon$, hence  the statement follows from \eqref{hat-v1} and \eqref{hat-v2}.
\end{proof}


\medskip

\noindent \emph{Proof of \rm{(b)} in Theorem \ref{thm:main}.}
\eqref{eq:main} follows from Lemma \ref{lem:stage1}, Lemma \ref{lem:stageII} and Lemma \ref{lemma-detAC1}.
\hfill $\square$

\bigskip

\appendix
\section{Gaussian estimates} \label{appendix}
\begin{proof}[Proof of Lemma \ref{x-y} ]
 Let us start by estimating $E\big(X_\epsilon(r)-Y_\epsilon(r)\big)^2$.
Using \eqref{defZ} and \eqref{defz0}, we  write
\begin{align}
  E&\big[\big(X_\epsilon(r)-Y_\epsilon(r)\big)^2\big]
  =\int_0^{\hat T}\e^{-2s}\big[  h\big(  r\sqrt{|\ln\epsilon|}, r\sqrt{|\ln\epsilon|}, 2(\hat T-s)\big) 
  \\ \nn
  &\hskip 4 cm +
 h_L\big(r\sqrt{|\ln\epsilon|}, r\sqrt{|\ln\epsilon|},2(\hat T-s)\big) \big]\,ds
 \\
 \nn
&-2 \int_0^{\hat T}  \e^{-2s}\,\int^L_{-L}\,h_L( r\sqrt{|\ln\epsilon|},y,\hat T-s)h( r\sqrt{|\ln\epsilon|},y,\hat T-s)\,dy\,ds
\end{align}
which, from the pointwise inequality \eqref{pineq} is bounded from above by
\begin{align} \label{ex-y2}
  \int_0^{\hat T}\e^{-2s}\big[h\big( &r\sqrt{|\ln\epsilon|}, r\sqrt{|\ln\epsilon|}, 2 (\hat T-s)\big) \\ \nn
 &
  -h_L\big(r\sqrt{|\ln\epsilon|}, r\sqrt{|\ln\epsilon|},2(\hat T-s)\big)\big]\,ds.  
 \end{align}
 From \eqref{rp}, we obtain 
 \begin{align}
 \nn
& h(r\sqrt{|\ln\epsilon|}, r\sqrt{|\ln\epsilon|},2(\hat T-s))
  -h_L(r\sqrt{|\ln\epsilon|}, r\sqrt{|\ln\epsilon|},2(\hat T-s)) 
  \\ \nn
  =&\frac{1}{\sqrt{4\pi(\hat T-s)}}
  -\frac{1}{\sqrt{4\pi(\hat T-s)}}\, \sum _{j\in \mathbb Z}
\big(\,\e^{-\frac{(4jL)^2}{4(\hat T-s)}}\,-\,
\e^{-\frac{(4jL+2L-2r\sqrt{|\ln\epsilon|})^2}{4(\hat T-s)}}\big)
 \end{align}
which can be rewritten as
\begin{align}
\nn
=& \frac{1}{\sqrt{4\pi(\hat T-s)}}\,\Big(\e^{-\frac{(2r\sqrt{|\ln\epsilon|}-2L)^2}{4(\hat T-s)}}
+\sum _{j\ge 1}\big( \e^{-\frac{(4jL+2L-2r\sqrt{|\ln\epsilon|})^2}{4(\hat T-s)}}-
\e^{-\frac{(4jL)^2}{4(\hat T-s)}}\big)
\\ \nn
&\qquad \qquad +\e^{-\frac{(2r\sqrt{|\ln\epsilon|}+2L)^2}{4(\hat T-s)}}
+\sum _{j\ge 1}\big( \e^{-\frac{(4jL+2r\sqrt{|\ln\epsilon|}+2L)^2}{4(\hat T-s)}}-
\e^{-\frac{(4jL)^2}{4(\hat T-s)}}\big)\,\Big)
\\ \nn
& \hskip 1cm  \le \frac{1}{\sqrt{4\pi(\hat T-s)}}\,\big(\e^{-\frac{(2r\sqrt{|\ln\epsilon|}-2L)^2}{4(\hat T-s)}}
+\e^{-\frac{(2r\sqrt{|\ln\epsilon|}+2L)^2}{4(\hat T-s)}}\,\big)
\\ \nn
 &\hskip 1cm  \le\frac{1}{\sqrt{\pi(\hat T-s)}}\,\e^{-\frac{L^2(1-c)^2}{\hat T-s}}, 
  \end{align}
for any $|r|\le \frac{c\,L}{\sqrt {|\ln\epsilon|}}$. Inserting in the last  integral in
\eqref{ex-y2}, it follows that
\[
E\big[\big(X_\epsilon(r)-Y_\epsilon(r)\big)^2\big] \le \e^{-\frac{L^2(1-c)^2}{\hat T}}
 \int_0^{\hat T}\frac{\e^{-2s}}{\sqrt{\pi(\hat T-s)}}\, ds.  
\]
Then, if $\frac{L^2}{\hat T}\to \infty$ as $\epsilon\to 0$, since the integral above is 
$O(\frac{1}{\sqrt {\hat T}})$ 
\[
\sup_{|r|\le \frac{c\,L}{\sqrt {|\ln\epsilon|}}}\,E\big[\hat T^{\frac 12}(X_\epsilon(r)-Y_\epsilon(r))^2 \big]
\to 0\mbox { as } \epsilon \to 0.
\]
In particular, in the case $L=|\ln\epsilon|$, we obtain 
\[
\sup_{|r|\le \frac{c\,L}{\sqrt {|\ln\epsilon|}}}\,E\big[\hat T^{\frac 12}(X_\epsilon(r)-Y_\epsilon(r))^2\big]
\le K \eps^{2(1-c)^2}
\]
 Given $h>0$, from the definition of $X_\eps$,  is not difficult to estimate 
 \[
 \bE \big[X_\eps(r+h)-X_\eps(r)\big]^2\le K\,h\sqrt {\hat T}. 
 \]
 Indeed, from \eqref{Xeps}
 \begin{align}
 \label{incX}
 \bE \big [&X_\eps(r+h)-X_\eps(r)\big]^2=\int^{\hat T}_0
 \frac{\e^{-2s}}{\sqrt{2\pi(\hat T-s)}}\,
 \big(1-\e^{-\frac{h^2}{2(1-\frac{s}{\hat T}}}\big)\,ds
 \\ \nn
 &=\sqrt{\hat T}\int^1_0 \frac{\e^{-2u\hat T}}{\sqrt{\pi(1-u)}}\,
 \big(1-\e^{-\frac{h^2}{2(1-u}}\big)\,du 
 \\ \nn
 &
 \le  \sqrt{\hat T} \int^1_0 \frac{1}{\sqrt{\pi(1-u)}} (1\land \frac{h^2}{2(1-u)})
\,du  \le K\,h\sqrt {\hat T} .
 \end{align}
 From \eqref{xdif2}, the corresponding estimate for $Y_\eps$ also holds: 
\[
 \bE \big[Y_\eps(r+h)-Y_\eps(r)\big]^2\le K\,h\sqrt {\hat T}. 
 \]
An application of Dudley's inequality analogous to that used to obtain \eqref{esup} yields that 
\[
\bE \Big[\sup_{|r|\le \frac{b\,L_\eps} {|\ln{\epsilon}|} }\hat T^{\frac 14}|(X_\epsilon(r)-Y_\epsilon(r))|\Big]\le K\,
 T^{\frac 14}\,\big(\ln{(\frac{K|\ln{\epsilon}|}{\eps^{2(1-b)^2}})}\big)^{\frac 12}\eps^{2(1-b)^2} .
  \]
  Then, \eqref{xeps-yeps} follows then from Borell's inequality. 
  \end{proof}
\begin{remark}\label{Xcont} An application of the Laplace method  to the second integral  in \eqref{incX} yields  the  more accurate  estimate
\[
\bE \big [X_\eps(r+h)-X_\eps(r)\big]^2\sim 
\frac{\big(1-\e^{-\frac{h^2}{2}}\big)}{2\sqrt{\pi\hat T}} \mbox{ as } \hat T\to \infty,
\]
used in Proposition \ref{prop:convY} to get that the family 
$\{(8\,\pi |\ln\epsilon|)^{\frac 14} X_\epsilon\colon 0<\eps<1\}$ is tight.
\end{remark}

\bigskip

\paragraph{\bf Acknowledgements:} This project started during a visit of Stella Brassesco to IM-UFRJ in 2020, supported by a CNE grant E-26/202.636/2019 and by Universal CNPq project 421383/2016-0. Stella Brassesco acknodwledges the hospitality and support from the IM-UFRJ during this visit. 

\medskip

\paragraph{\bf Conflict of interest:} The authors have no competing interests to declare that are relevant to the content of  this article.

\medskip

\paragraph{\bf Data Availability:} Data sharing is not applicable to this article as no datasets were generated or analyzed during the process.

\end{document}